\theoremstyle{plain}
\newtheorem{Thm}{Theorem}[section]
\newtheorem{Lem}[Thm]{Lemma}
\newtheorem{Crlr}[Thm]{Corollary}
\newtheorem{Prop}[Thm]{Proposition}
\newtheorem{Obs}[Thm]{Observation}
\theoremstyle{definition}
\newtheorem{Def}[Thm]{Definition}
\theoremstyle{remark}
\def\finf{\mathop{{\rm I}\kern -.27 em {\rm F}}\nolimits}
\begin{document}

\title{\bf Domination Value in $P_2 \square P_n$ and $P_2 \square C_n$}

\author{\Large Eunjeong Yi\\
Texas A\&M University at Galveston\\
Galveston, TX 77553, USA\\
{\tt yie@tamug.edu}}

\maketitle

\date{}

\begin{abstract}
A set $D \subseteq V(G)$ is a \emph{dominating set} of a graph $G$ if
every vertex of $G$ not in $D$ is adjacent to at least one vertex in $D$.
A \emph{minimum dominating set} of $G$, also called a $\gamma(G)$-set, 
is a dominating set of $G$ of minimum cardinality. For each vertex $v \in V(G)$, we define the
\emph{domination value} of $v$ to be the number of $\gamma(G)$-sets to which $v$ belongs. In this paper, we find the total number of minimum dominating sets and
characterize the domination values for $P_2 \square P_n$ and $P_2 \square C_n$.
\end{abstract}



\section{Introduction}

Let $G = (V(G),E(G))$ be a simple, undirected, and nontrivial
graph. For $S \subseteq V(G)$, we denote by $\langle S \rangle$ the subgraph of $G$ induced by $S$.
For a vertex $v \in V(G)$, the \emph{open neighborhood of $v$} is the set $N(v)=\{u \mid uv \in E(G)\}$, and the
\emph{closed neighborhood of $v$} is the set $N[v]=N(v) \cup
\{v\}$. For $S \subseteq V(G)$, the \emph{open neighborhood of $S$} is the set $N(S)=\cup_{v \in S} N(v)$ 
and the \emph{closed neighborhood of $S$} is the set $N[S]=N(S) \cup S$.

A set $D \subseteq V(G)$ is a \emph{dominating set} if $N[D]=V(G)$, and is a \emph{total dominating set} if $N(D)=V(G)$. The \emph{domination number of a graph $G$}, denoted by $\gamma(G)$, is the minimum of the cardinalities of all dominating sets of $G$. A \emph{minimum dominating set} of $G$, also called a $\gamma(G)$-set, 
is a dominating set of $G$ of minimum cardinality. For discussions on domination (resp. total domination) in graphs, see \cite{B1, B2, EJ, Dom1, Dom2, 
Ore} (resp. see \cite{EJ2, Dom1, Henning}).  Slater \cite{Slater} introduced the notion of the number of dominating sets of $G$, which he denoted by HED$(G)$ in honor of Steve Hedetniemi on the occasion of his 60th birthday; further, Slater used \#$\gamma(G)$ to denote the number of $\gamma(G)$-sets. Following \cite{Kang, YI}, we denote by $\tau(G)$ the total number of $\gamma(G)$-sets. 
For each vertex $v \in V(G)$, we define the \emph{domination value} of $v$ in $G$,
denoted by $DV_G(v)$, to be the number of $\gamma(G)$-sets to
which $v$ belongs; we often drop $G$ when ambiguity is not a
concern. Clearly, $0 \le DV_G(v) \le \tau(G)$ for any graph $G$
and for any vertex $v \in V(G)$. See \cite{YI} for an introductory discussion on
domination value in graphs and \cite{Kang} for an introductory discussion on total domination value in graphs.

The \emph{Cartesian product} of two graphs $G$ and $H$,
denoted by $G \square H$, is the graph with the vertex set $V(G)
\times V(H)$ such that $(u,v)$ is adjacent to $(u', v')$ if and
only if (i) $u=u'$ and $vv' \in E(H)$ or (ii) $v=v'$ and $uu' \in
E(G)$. For other graph theory terminology, refer to \cite{CZ}.

We denote by $P_n$ and $C_n$ the path and the cycle on $n$ vertices, respectively. In \cite{Kinch}, Jacobson and Kinch obtained the results on
$\gamma(P_m \square P_n)$ for $m=2,3,4$. Later, Hare developed an
algorithm to compute $\gamma(P_m \square P_n)$ and was able to
find expressions for $\gamma(P_m \square P_n)$ for a number of
different values of $m$ and $n$ (see \cite{Hare}). Chang and
Clark proved the formulas found by Hare for $\gamma(P_5 \square
P_n)$ and $\gamma(P_6 \square P_n)$ in \cite{5xn}. The complexity of determining $\gamma(P_m \square P_n)$ is open as of \cite{Hedetniemi}. In \cite{cycles}, Klav\v{z}ar and Seifter 
obtained results on $\gamma(C_m \square C_n)$ for $m=3,4,5$.

In section 2, we present relevant results from \cite{YI}. In 
sections 3 and 4, noting $\gamma(P_2 \square P_n) \neq \gamma(P_2 \square C_n)$ for $n \equiv 0 \pmod 4$, we investigate the total number of minimum dominating sets and the domination value for two classes of graphs, $P_2 \square P_n$ and $P_2 \square C_n$.


\section{Preliminaries and domination value in paths and cycles}

We first recall the following observations.

\begin{Obs} \cite{YI} \label{Obs1}
$\displaystyle \sum_{v \in V(G)} DV_G(v) = \tau(G) \cdot
\gamma(G)$
\end{Obs}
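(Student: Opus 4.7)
The plan is to prove this identity by a standard double-counting argument applied to the incidence relation between vertices and minimum dominating sets. Specifically, I would count in two ways the cardinality of the set
\[
\mathcal{P} = \{(v, D) : v \in V(G),\ D \text{ is a } \gamma(G)\text{-set},\ v \in D\}.
\]

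First, I would fix the vertex coordinate: grouping the pairs in $\mathcal{P}$ by their first coordinate $v$, the number of pairs with first coordinate $v$ is precisely the number of $\gamma(G)$-sets containing $v$, which is $DV_G(v)$ by definition. Summing over all $v \in V(G)$ yields $|\mathcal{P}| = \sum_{v \in V(G)} DV_G(v)$. Next, I would fix the set coordinate: grouping the pairs in $\mathcal{P}$ by their second coordinate $D$, each $\gamma(G)$-set $D$ contributes exactly $|D| = \gamma(G)$ pairs, since every vertex of $D$ gives one valid pair. Since there are $\tau(G)$ choices of $D$, this yields $|\mathcal{P}| = \tau(G) \cdot \gamma(G)$. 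Equating the two expressions for $|\mathcal{P}|$ gives the claimed identity.

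There is no real obstacle here; the statement is essentially a ``handshake''-style identity and the argument is a one-line double count. The only thing worth being explicit about is that the definition of $DV_G(v)$ and the constant cardinality $\gamma(G)$ of every $\gamma(G)$-set are used, and that the sum $\sum_{v \in V(G)} DV_G(v)$ genuinely equals $|\mathcal{P}|$ rather than some weighted variant — but both are immediate from the definitions given in the introduction.
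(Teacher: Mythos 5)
Your double-counting argument is correct and is the standard (and surely intended) proof of this identity; the paper itself states the observation without proof, citing \cite{YI}, and your incidence-counting over pairs $(v,D)$ is exactly the natural argument behind it. Nothing is missing.
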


\begin{Obs} \cite{YI} \label{Obs2}
If there is an isomorphism of graphs carrying a vertex $v$ in $G$
to a vertex $v'$ in $G'$, then $DV_G(v)=DV_{G'}(v')$.
\end{Obs}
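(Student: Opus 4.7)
The plan is to exhibit an explicit bijection between the $\gamma(G)$-sets containing $v$ and the $\gamma(G')$-sets containing $v'$, using the given isomorphism $\phi \colon G \to G'$ with $\phi(v)=v'$. First I would verify that $\phi$ sends dominating sets to dominating sets: if $D \subseteq V(G)$ satisfies $N_G[D]=V(G)$, then because $\phi$ preserves adjacency and is a bijection on vertices, $N_{G'}[\phi(D)] = V(G')$. Applying the same reasoning to $\phi^{-1}$ shows that the assignment $D \mapsto \phi(D)$ is a cardinality-preserving bijection between the dominating sets of $G$ and the dominating sets of $G'$.

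From cardinality-preservation it follows immediately that $\gamma(G)=\gamma(G')$, so the map $D \mapsto \phi(D)$ restricts to a bijection between the $\gamma(G)$-sets and the $\gamma(G')$-sets. Restricting once more, since $\phi$ is a bijection on vertex sets we have $v \in D$ if and only if $v' = \phi(v) \in \phi(D)$. Thus $D \mapsto \phi(D)$ provides a bijection between $\{D : D \text{ is a } \gamma(G)\text{-set with } v \in D\}$ and $\{D' : D' \text{ is a } \gamma(G')\text{-set with } v' \in D'\}$, and counting either side yields $DV_G(v)=DV_{G'}(v')$.

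I do not expect any substantive obstacle: the statement is a structural observation whose proof reduces to transporting each defining property (adjacency, being a dominating set, minimum cardinality, membership) along $\phi$. The only small point requiring care is confirming that the image of a minimum dominating set is itself minimum in $G'$, but this is immediate from the equality $\gamma(G)=\gamma(G')$ combined with the fact that $\phi$ preserves cardinality.
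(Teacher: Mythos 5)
Your argument is correct: transporting dominating sets along the isomorphism $\phi$, checking that domination, cardinality, and membership of $v$ are all preserved, and concluding via the induced bijection is exactly the standard (and essentially the only) proof of this observation, which the paper states without proof as a cited result from \cite{YI}. No gaps.
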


It is well known that $\gamma(P_n)=\gamma(C_n)=\lceil \frac{n}{3} \rceil$. If we let the vertices of the path $P_n$ be labeled $1$ through $n$
consecutively, then we have the following

\begin{Thm} \cite{YI}
For $n \ge 2$,
\begin{equation*} \tau(P_n) = \left\{
\begin{array}{lr}
\ 1 & \mbox{ if } n \equiv 0  \mbox{ (mod 3)} \ \\
\ n+ \frac{1}{2}\lfloor \frac{n}{3} \rfloor (\lfloor \frac{n}{3}\rfloor-1) & \mbox{ if } n \equiv 1  \mbox{ (mod 3)} \ \\
\ 2+ \lfloor \frac{n}{3} \rfloor & \mbox{ if } n \equiv 2 \mbox{
(mod 3)}.
\end{array} \right.
\end{equation*}
\end{Thm}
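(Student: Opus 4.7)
The plan is to encode each minimum dominating set of $P_n$ by a gap sequence and count these sequences in each residue class of $n$ modulo $3$. Label the vertices of $P_n$ as $1,2,\ldots,n$ and let $D=\{p_1<p_2<\cdots<p_k\}$ be a dominating set, where $k=\lceil n/3\rceil$. Introduce gaps
\[
d_1=p_1,\quad d_i=p_i-p_{i-1}\ (2\le i\le k),\quad d_{k+1}=n+1-p_k,
\]
so that $d_1+d_2+\cdots+d_{k+1}=n+1$. I will first verify that $D$ dominates $P_n$ if and only if $d_1,d_{k+1}\in\{1,2\}$ (so that vertex $1$ and vertex $n$ are dominated) and $d_i\in\{1,2,3\}$ for $2\le i\le k$ (so that no two consecutive internal vertices go uncovered between $p_{i-1}$ and $p_i$). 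This reduces $\tau(P_n)$ to counting compositions of $n+1$ into $k+1$ parts satisfying these box constraints.

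Next, I would perform the count by a standard complementation trick: let $e_1=2-d_1$, $e_{k+1}=2-d_{k+1}$, and $e_i=3-d_i$ for $2\le i\le k$. Then $e_1,e_{k+1}\in\{0,1\}$, $e_i\in\{0,1,2\}$ for $2\le i\le k$, and
\[
e_1+e_2+\cdots+e_{k+1}=(2+3(k-1)+2)-(n+1)=3k-n.
\]
Writing $n=3m+r$ with $r\in\{0,1,2\}$, this sum equals $0$, $2$, or $1$ respectively.

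I then handle the three cases separately. When $r=0$, the $e_i$ must all vanish, giving a unique dominating set and $\tau(P_n)=1$. When $r=2$, exactly one $e_i$ equals $1$ and the rest vanish, giving $k+1=m+2=2+\lfloor n/3\rfloor$ sets. When $r=1$, the possibilities split into (a) two endpoints' $e$-values equal $1$, chosen from the $k+1=m+2$ positions subject to the box constraints — here the two "endpoint" indices allow values only in $\{0,1\}$, so $\binom{m+2}{2}$ is in fact the correct unrestricted count — and (b) a single internal $e_i=2$, giving $m$ choices. Summing these yields $\binom{m+2}{2}+m=\tfrac{1}{2}m(m-1)+3m+1=n+\tfrac{1}{2}\lfloor n/3\rfloor(\lfloor n/3\rfloor-1)$, matching the claimed formula.

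The step that requires the most care is the first one: establishing the exact box constraints on the gaps from the domination condition, in particular confirming that $d_i\le 3$ suffices to cover all vertices strictly between $p_{i-1}$ and $p_i$ (checking the cases $d_i=1,2,3$ by inspection, and noting that $d_i=4$ leaves the middle vertex undominated). Once this parameterization is justified, the enumeration in each residue class is a short integer-composition count and the arithmetic identities above complete the proof.
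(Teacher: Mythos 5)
Your argument is correct and complete. Note that the paper you are working from does not prove this theorem at all: it is quoted from the reference [YI] (``Domination Value in Graphs'') as background, so there is no in-paper proof to compare against. On its own merits, your gap-sequence encoding is sound: the equivalence between ``$D$ is a dominating set of size $k=\lceil n/3\rceil$'' and ``$d_1,d_{k+1}\in\{1,2\}$, $d_i\in\{1,2,3\}$ for $2\le i\le k$, $\sum d_i=n+1$'' is exactly right (the only point needing care, as you say, is that $d_i=4$ strands the middle vertex), and the complementation to $\sum e_i=3k-n\in\{0,1,2\}$ reduces the count to a finite case analysis. I checked the arithmetic: for $n=3m+1$ you get $\binom{m+2}{2}+m=\frac{1}{2}(m^2+5m+2)=n+\frac{1}{2}m(m-1)$, and for $n=3m+2$ you get $m+2$; both match the stated formulas, and small cases ($n=2,4,7$) confirm. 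One cosmetic slip: in case (a) of the $r=1$ analysis you write ``two endpoints' $e$-values equal $1$'' where you mean ``two of the $k+1$ positions have $e$-value $1$''; the subsequent sentence makes the intent clear, but you should fix the wording. You also implicitly use $\gamma(P_n)=\lceil n/3\rceil$, which the paper records as well known, so that is fine to assume.
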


For the domination value of a vertex $v$ on $P_n$, by Observation
\ref{Obs2}, $DV(v)=DV(n+1-v)$ for $1 \le v \le n$. More
precisely, we have the classification results which follow.

\begin{Crlr} \cite{YI}
Let $v \in V(P_{3k})$, where $k \ge 1$. Then
\begin{equation*}
DV(v)= \left\{
\begin{array}{ll}
0 & \mbox{ if } v \equiv 0,1  \mbox{ (mod 3) } \\
1 & \mbox{ if } v \equiv 2  \mbox{ (mod 3) } .
\end{array} \right.
\end{equation*}
\end{Crlr}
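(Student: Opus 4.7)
The plan is to exploit the fact that by the preceding theorem, $\tau(P_{3k})=1$, so $P_{3k}$ admits a \emph{unique} minimum dominating set $D$, and the claim reduces to identifying $D$ explicitly as $\{v \in V(P_{3k}) : v \equiv 2 \pmod 3\}$. Once this is done, $DV(v)=1$ precisely on this set and $DV(v)=0$ elsewhere, matching the statement.

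To pin down $D$, I would run a counting/tiling argument. Since $|D|=\gamma(P_{3k})=k$ and $|V(P_{3k})|=3k$, and each $v \in V(P_{3k})$ satisfies $|N[v]| \le 3$, the equality $\bigcup_{v \in D} N[v] = V(P_{3k})$ forces the closed neighborhoods $\{N[v] : v \in D\}$ to partition $V(P_{3k})$ into blocks of size exactly $3$. In particular, no endpoint can lie in $D$ (endpoints have $|N[v]|=2$), so $1, 3k \notin D$. Then vertex $1$ must be dominated by its only neighbor, forcing $2 \in D$; this uses up the block $\{1,2,3\}$, and the argument iterates: the smallest undominated vertex $3i+1$ forces $3i+2 \in D$, for $i=0,1,\ldots,k-1$. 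Hence $D=\{2,5,8,\ldots,3k-1\}$.

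With $D$ identified, the corollary follows immediately: $DV(v) = 1$ if $v \equiv 2 \pmod 3$ and $DV(v)=0$ otherwise. The only subtlety — and the step I would take most care with — is the initial observation that $|N[v]| \le 3$ combined with $|D| \cdot 3 = |V(P_{3k})|$ forces both disjointness of the closed neighborhoods and exclusion of endpoints from $D$; everything after that is a short induction. As a sanity check, Observation~\ref{Obs1} gives $\sum_v DV(v) = \tau(P_{3k}) \cdot \gamma(P_{3k}) = 1 \cdot k = k$, which agrees with our answer since exactly $k$ vertices (those $\equiv 2 \pmod 3$) contribute $1$.
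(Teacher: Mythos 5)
Your proof is correct and follows the same route the paper intends: the statement is presented as a corollary of $\tau(P_{3k})=1$, and you correctly identify the unique $\gamma(P_{3k})$-set as $\{v \mid v\equiv 2 \pmod 3\}$ via the forced partition of $V(P_{3k})$ into closed neighborhoods of size three. The key counting step ($3k=|V(P_{3k})|\le\sum_{v\in D}|N[v]|\le 3k$, forcing pairwise disjointness and excluding the endpoints) and the ensuing induction pinning down $2,5,\ldots,3k-1$ are both sound.
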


\begin{Prop} \cite{YI}
Let $v \in V(P_{3k+1})$, where $k \ge 1$. Write $v=3q+r$, where
$q$ and $r$ are non-negative integers such that $0 \le r < 3$.
Then, noting $\tau(P_{3k+1})=\frac{1}{2}(k^2+5k+2)$, we have
\begin{equation*} DV(v)= \left\{
\begin{array}{ll}
\frac{1}{2}q(q+3) & \mbox{ if } v \equiv 0 \mbox{ (mod 3) } \ \\
(q+1)(k-q+1) & \mbox{ if } v \equiv 1 \mbox{ (mod 3) } \ \\
\frac{1}{2}(k-q)(k-q+3) & \mbox{ if } v \equiv 2 \mbox{ (mod 3) }.
\end{array} \right.
\end{equation*}
\end{Prop}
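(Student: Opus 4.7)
The plan is to enumerate all $\gamma(P_{3k+1})$-sets explicitly via their ``gap sequences,'' then for each residue class of $v$ modulo $3$ count how many of these sets contain $v$.

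First I parameterize: write a minimum dominating set as $\{v_1 < v_2 < \cdots < v_{k+1}\}$. Since every vertex of $P_{3k+1}$ must be dominated, one has $v_1 \in \{1,2\}$, $v_{k+1} \in \{3k, 3k+1\}$, and each gap $g_\ell := v_{\ell+1}-v_\ell$ lies in $\{1,2,3\}$. Splitting by $(v_1, v_{k+1})$ into four cases, the gap sums are forced to be $3k-1$, $3k$, $3k-2$, and $3k-1$ respectively, and elementary counting of compositions into parts in $\{1,2,3\}$ yields $k$, $1$, $\binom{k+1}{2}$, and $k$ sets. Their total $\frac{1}{2}(k^2+5k+2) = \tau(P_{3k+1})$ provides a convenient sanity check.

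Second, in each case I record the vertex set in closed form in terms of the positions of the ``nonstandard'' gaps (those of size $\ne 3$). For example, when $v_1=1$, $v_{k+1}=3k$, and the unique size-$2$ gap is at position $i$, the set is $\{3\ell-2 : 1 \le \ell \le i\} \cup \{3\ell : i \le \ell \le k\}$; analogous descriptions hold in the other cases. Crucially, each piece of each set is an arithmetic progression with common difference $3$, so the residue mod $3$ of every element is transparent.

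Third, writing $v = 3q + r$, the residue $r$ immediately selects which piece of which case can contain $v$, reducing ``$v$ lies in the set'' to a simple inequality on $q$ and on the nonstandard-gap positions. Summing the resulting counts over all sub-cases produces the three formulas in the proposition; each sum reduces to a short polynomial identity. For instance, when $r = 1$ the sub-case contributions total $(k-q) + 1 + q(k-q) + 0 + q = (q+1)(k-q+1)$, matching the stated value.

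The main obstacle is not conceptual but bookkeeping of the index ranges at extreme values of $q$ (e.g., $q=0$ or $q=k$ when $r = 1$), where some contributions vanish; the symmetry $DV(v) = DV(3k+2-v)$ afforded by Observation~\ref{Obs2} serves as a useful internal check within each residue class.
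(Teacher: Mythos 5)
Your proposal is correct, and there is nothing in this paper to compare it against: Proposition 2.5 is quoted from reference \cite{YI} without proof, so your argument stands as an independent verification rather than a variant of the author's. The gap-sequence enumeration is sound: $v_1\in\{1,2\}$ and $v_{k+1}\in\{3k,3k+1\}$ are forced by domination of the endpoints, gaps must lie in $\{1,2,3\}$ to dominate the intermediate vertices, and the four $(v_1,v_{k+1})$ cases force gap sums $3k-1$, $3k$, $3k-2$, $3k-1$, yielding $k$, $1$, $k+\binom{k}{2}=\binom{k+1}{2}$, and $k$ sets, which indeed total $\frac{1}{2}(k^2+5k+2)=\tau(P_{3k+1})$. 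I checked the residue bookkeeping in all three classes: for $v=3q+1$ the contributions are $(k-q)+1+q(k-q)+q=(q+1)(k-q+1)$; for $v=3q+2$ they are $2(k-q)+\binom{k-q}{2}=\frac{1}{2}(k-q)(k-q+3)$; for $v=3q$ they are $2q+\binom{q}{2}=\frac{1}{2}q(q+3)$, all as claimed, and consistent with the reflection symmetry $DV(v)=DV(3k+2-v)$. The only step worth making fully explicit in a write-up is the converse direction of your parameterization (that every set with $v_1\in\{1,2\}$, $v_{k+1}\in\{3k,3k+1\}$, and gaps in $\{1,2,3\}$ is in fact dominating), which is immediate but should be stated.
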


\begin{Prop} \cite{YI}
Let $v \in V(P_{3k+2})$, where $k \ge 0$. Write $v=3q+r$, where
$q$ and $r$ are non-negative integers such that $0 \le r < 3$.
Then, noting $\tau(P_{3k+2})=k+2$, we have
\begin{equation*}
DV(v)= \left\{
\begin{array}{ll}
0 & \mbox{ if } v \equiv 0 \mbox{ (mod 3) } \ \\
1+q & \mbox{ if } v \equiv 1 \mbox{ (mod 3) } \ \\
k+1-q & \mbox{ if } v \equiv 2 \mbox{ (mod 3) } .
\end{array} \right.
\end{equation*}
\end{Prop}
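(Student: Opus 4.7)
The plan is to enumerate the $\gamma(P_{3k+2})$-sets explicitly and then read off $DV(v)$ from how many of them contain $v$. Since $\gamma(P_{3k+2}) = \lceil (3k+2)/3 \rceil = k+1$, a $\gamma$-set $D = \{v_1 < v_2 < \cdots < v_{k+1}\}$ has exactly $k+1$ elements, and the domination condition forces $v_1 \in \{1,2\}$, $v_{k+1} \in \{3k+1, 3k+2\}$, and $v_{i+1} - v_i \le 3$ for each $i$.

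I split on the endpoint pair $(v_1, v_{k+1})$. The choice $(1, 3k+1)$ forces the $k$ gaps to sum to $3k$ with each at most $3$, hence all equal to $3$, giving $A = \{1, 4, 7, \ldots, 3k+1\}$. Symmetrically, $(2, 3k+2)$ gives $B = \{2, 5, 8, \ldots, 3k+2\}$. The choice $(1, 3k+2)$ is impossible since the gaps would need to sum to $3k+1 > 3k$. Finally, $(2, 3k+1)$ makes the gaps sum to $3k-1$, so exactly one gap equals $2$ and the others equal $3$; the position $j \in \{1, \ldots, k\}$ of the short gap indexes $k$ distinct sets
\[
C_j = \{2, 5, \ldots, 3j-1,\; 3j+1, 3j+4, \ldots, 3k+1\}.
\]
This produces $1 + 1 + k = k+2$ dominating sets, reconfirming $\tau(P_{3k+2}) = k+2$.

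Writing $v = 3q + r$ with $0 \le r < 3$, I finish by counting memberships. For $r = 0$, the vertex $v$ lies in none of $A$ (entries $\equiv 1 \pmod 3$), $B$ (entries $\equiv 2 \pmod 3$), or any $C_j$ (initial block $\equiv 2 \pmod 3$, terminal block $\equiv 1 \pmod 3$), so $DV(v) = 0$. For $r = 1$, $v \in A$ always, $v \notin B$, and $v \in C_j$ iff $v$ lies in the terminal block, i.e.\ $3q+1 \ge 3j+1$, equivalently $j \le q$; this contributes $q$ extra sets, yielding $DV(v) = 1 + q$. For $r = 2$, $v \in B$ always, $v \notin A$, and $v \in C_j$ iff $3q+2 \le 3j-1$, equivalently $j \ge q+1$; this contributes $k-q$ extra sets, yielding $DV(v) = 1 + (k-q) = k+1-q$. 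The main care point is the $C_j$ bookkeeping: one must verify that inside a single $C_j$ the residues of the listed elements switch from $\equiv 2 \pmod 3$ to $\equiv 1 \pmod 3$ precisely at the short gap, so that the two inequalities above correctly capture membership. Observation \ref{Obs1} offers a final sanity check, since $\sum_{q=0}^{k}(1+q) + \sum_{q=0}^{k}(k+1-q) = (k+1)(k+2)$ equals $\tau(P_{3k+2}) \cdot \gamma(P_{3k+2})$.
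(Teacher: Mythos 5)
Your proof is correct. Note that the paper itself gives no proof of this proposition --- it is quoted as a preliminary from the reference \cite{YI} --- so there is no in-paper argument to compare against; your self-contained enumeration stands on its own. The key steps all check out: the characterization of a minimum dominating set $\{v_1 < \cdots < v_{k+1}\}$ of $P_{3k+2}$ by $v_1 \le 2$, $v_{k+1} \ge 3k+1$, and consecutive gaps at most $3$ is both necessary and sufficient; the gap-sum argument correctly yields exactly the $k+2$ sets $A$, $B$, $C_1, \ldots, C_k$; and the residue bookkeeping for membership in the $C_j$ (the threshold inequalities $j \le q$ for $r=1$ and $j \ge q+1$ for $r=2$) gives the stated counts, consistent with the $k=0$ and $k=1$ cases and with the Observation \ref{Obs1} identity $\sum_v DV(v) = (k+2)(k+1)$.
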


If we let the vertices of the cycle $C_n$ be labeled 1 though $n$ cyclically, then we have the following

\begin{Thm}\cite{YI}\label{thm on C}
For $n \ge 3$,
\begin{equation*}
\tau(C_n) = \left\{
\begin{array}{lr}
\ 3 & \mbox{ if } n \equiv 0  \mbox{ (mod 3)} \ \\
\ n(1+\frac{1}{2} \lfloor \frac{n}{3} \rfloor) & \mbox{ if } n \equiv 1 \mbox{ (mod 3)} \ \\
\ n & \mbox{ if } n \equiv 2  \mbox{ (mod 3)} .
\end{array} \right.
\end{equation*}
\end{Thm}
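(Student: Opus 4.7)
The plan is to parametrize the $\gamma(C_n)$-sets by their cyclic sequences of gaps. Set $m := \gamma(C_n) = \lceil n/3 \rceil$, label $V(C_n) = \{1,2,\ldots,n\}$ cyclically, and for any $m$-subset $D = \{d_1,d_2,\ldots,d_m\}$ listed in cyclic order, put $g_i := d_{i+1}-d_i$ modulo $n$ (with $d_{m+1} := d_1$). Then $g_i \ge 1$ and $\sum_{i=1}^m g_i = n$.

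The first step is to observe that an $m$-subset $D$ is dominating if and only if $g_i \in \{1,2,3\}$ for every $i$. Indeed, if some $g_i \ge 4$, the vertex $d_i+2$ lies strictly between $d_i$ and $d_{i+1}$ on the cycle and is adjacent to no element of $D$; conversely, a gap of size $1$, $2$, or $3$ leaves no uncovered vertex on the arc from $d_i$ to $d_{i+1}$. So $\tau(C_n)$ counts the $m$-subsets of $V(C_n)$ whose cyclic gap sequence lies in $\{1,2,3\}^m$.

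The second step is to exchange unordered sets for ordered data. I would count the ordered pairs $\bigl(d_1,(g_1,\ldots,g_m)\bigr)$ with $d_1 \in V(C_n)$ and $(g_1,\ldots,g_m) \in \{1,2,3\}^m$ summing to $n$; each admissible $D$ is obtained from exactly $m$ such pairs (one per choice of basepoint). Writing
\[
N_n := \bigl|\{(g_1,\ldots,g_m) \in \{1,2,3\}^m : g_1+\cdots+g_m = n\}\bigr|,
\]
this yields $\tau(C_n) = nN_n/m$.

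Finally, the substitution $a_i := 3 - g_i \in \{0,1,2\}$ reduces $N_n$ to the number of tuples in $\{0,1,2\}^m$ with $\sum a_i = 3m - n$. For $n = 3k$ one has $3m-n = 0$, forcing $N_n = 1$; for $n = 3k+2$, exactly one $a_i = 1$, so $N_n = m$; for $n = 3k+1$, either one $a_i = 2$ or two $a_i = 1$, giving $N_n = m + \binom{m}{2} = m(m+1)/2$. Plugging into $\tau(C_n) = nN_n/m$ and simplifying (in particular, $(k+1)(k+2)/2 = (1+k/2)(k+1)$ for the middle case) yields the three formulas in the theorem. The only delicate points are the gap characterization of dominating sets and the bookkeeping between ordered tuples and unordered sets; the combinatorics of $N_n$ is then elementary.
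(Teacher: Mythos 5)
Your argument is correct. The gap characterization (an $m$-subset with $m=\lceil n/3\rceil$ dominates $C_n$ iff every cyclic gap lies in $\{1,2,3\}$) is right, the basepoint/orbit bookkeeping $\tau(C_n)\cdot m = n\cdot N_n$ is handled properly (each admissible set yields exactly $m$ pointed pairs, each pointed pair yields exactly one set), and the three evaluations of $N_n$ via $a_i=3-g_i$ check out, including the small cases $n=3,4,5$. Note, however, that the paper does not prove this theorem: it is recalled from the reference [YI] (``Domination Value in Graphs,'' submitted) as a preliminary, so there is no in-paper proof to compare against. Your proof stands as a clean, self-contained derivation; the only points that genuinely need care are the ones you flag, namely that a gap of size $\ge 4$ leaves the vertex $d_i+2$ undominated, and that the count of pointed pairs is $n N_n$ rather than $m N_n$ (the basepoint ranges over all $n$ vertices, not just over $D$), both of which you get right.
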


By Theorem \ref{thm on C}, Observation \ref{Obs1}, Observation \ref{Obs2}, and the vertex-transitivity of $C_n$, we have the following

\begin{Crlr}\cite{YI}
Let $v\in V(C_n)$, where $n \ge 3$. Then
\begin{equation*}
DV(v) = \left\{
\begin{array}{lr}
\ 1 & \mbox{ if } n \equiv 0  \mbox{ (mod 3)} \ \\
\ \frac{1}{2} \lceil\frac{n}{3}\rceil (1+ \lceil\frac{n}{3}\rceil) & \mbox{ if } n \equiv 1  \mbox{ (mod 3)} \ \\
\ \lceil\frac{n}{3}\rceil & \mbox{ if } n \equiv 2 \mbox{ (mod 3)} .
\end{array} \right.
\end{equation*}
\end{Crlr}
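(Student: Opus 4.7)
The plan is to observe that $C_n$ is vertex-transitive, so by Observation~\ref{Obs2} the domination value is a constant function on $V(C_n)$, and then extract that constant value by averaging via Observation~\ref{Obs1}.

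First, I would note that for any two vertices $v, v' \in V(C_n)$, the rotation of $C_n$ that sends $v$ to $v'$ is a graph automorphism, so Observation~\ref{Obs2} yields $DV(v) = DV(v')$. Hence there is a single value $d$ such that $DV(v) = d$ for every $v \in V(C_n)$.

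Second, I would apply Observation~\ref{Obs1}:
\[
n \cdot d \;=\; \sum_{v \in V(C_n)} DV(v) \;=\; \tau(C_n)\cdot \gamma(C_n) \;=\; \tau(C_n)\cdot \left\lceil \tfrac{n}{3}\right\rceil,
\]
so $d = \tau(C_n)\lceil n/3\rceil/n$. Substituting the three cases of Theorem~\ref{thm on C} finishes the proof: if $n \equiv 0\pmod 3$ then $d = 3\cdot(n/3)/n = 1$; if $n \equiv 2\pmod 3$ then $d = n\cdot\lceil n/3\rceil/n = \lceil n/3\rceil$; and if $n \equiv 1\pmod 3$, writing $n = 3k+1$ so that $\lceil n/3\rceil = k+1$ and $\tau(C_n) = n(1 + k/2)$, one gets $d = (k+1)(1 + k/2) = \tfrac{1}{2}(k+1)(k+2) = \tfrac{1}{2}\lceil n/3\rceil(1 + \lceil n/3\rceil)$, matching the stated formula.

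There is no real obstacle here; the statement is essentially a corollary in the literal sense, since all the heavy lifting was done in computing $\tau(C_n)$. The only thing to be careful about is tracking $\lceil n/3\rceil$ versus $\lfloor n/3\rfloor$ in the case $n \equiv 1 \pmod 3$, where $\tau(C_n)$ is written using $\lfloor \cdot \rfloor$ but the final formula is most cleanly expressed using $\lceil \cdot \rceil$.
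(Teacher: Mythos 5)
Your proposal is correct and matches the paper's approach exactly: the paper derives this corollary from Theorem \ref{thm on C}, Observation \ref{Obs1}, Observation \ref{Obs2}, and the vertex-transitivity of $C_n$, which is precisely the averaging argument you give. The case computations, including the $\lfloor n/3\rfloor$ versus $\lceil n/3\rceil$ bookkeeping when $n\equiv 1\pmod 3$, all check out.
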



\section{Total number of minimum dominating sets and domination value in $P_2 \square P_n$}

We consider $P_2 \square P_n$ ($n \ge 2$) as two copies
of $P_n$ with vertices labeled $x_1, x_2, \ldots, x_n$ and $y_1,
y_2, \ldots, y_n$ with only the edges $x_iy_i$, for each $i$ ($1
\le i \le n$), between two paths (see Figure \ref{2xn}).

\begin{figure}[htpb]
\begin{center}
\hspace*{1.43in}\scalebox{0.45}{\input{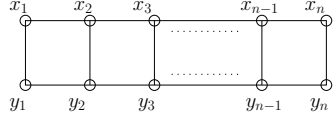}}
\vspace*{-2.3in}
\caption{Labeling of vertices of $P_2 \square P_n$}\label{2xn}
\end{center}
\end{figure}

We first recall the following.

\begin{Thm} \cite{Kinch}\label{domination 2n}
For $n \ge 2$, $\gamma(P_2 \square
P_n)=\lceil\frac{n+1}{2}\rceil$.
\end{Thm}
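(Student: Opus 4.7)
The plan is to prove $\gamma(P_2 \square P_n) = \lceil (n+1)/2 \rceil$ by establishing matching upper and lower bounds, which are handled by rather different arguments.

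For the upper bound I would exhibit an explicit dominating set of the target size. A natural candidate is the ``staircase'' $S = \{y_1, x_3, y_5, x_7, \ldots\}$, placing one vertex at each odd column $\le n$ and alternating between the two rows. A direct check shows that each member covers its own column in full, and that the closed neighborhoods of two consecutive members of $S$ jointly cover both vertices of the single column strictly between their positions. When $n$ is odd, $|S| = (n+1)/2$ and $S$ itself is a dominating set. When $n$ is even, restricting $S$ to columns $\le n-1$ leaves exactly one uncovered vertex in column $n$ (on the row opposite the last staircase vertex), and adjoining that vertex yields a dominating set of size $n/2 + 1 = \lceil (n+1)/2 \rceil$.

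For the lower bound I would start from the counting inequality $\sum_{v \in D} |N[v]| \ge |N[D]| = 2n$, valid for every dominating set $D$. Since $P_2 \square P_n$ has maximum degree $3$, we have $|N[v]| \le 4$ for every vertex, with equality precisely at the interior-column vertices $x_i, y_i$ with $2 \le i \le n-1$. This immediately yields $|D| \ge \lceil n/2 \rceil$, and for $n = 2k+1$ odd this already gives $|D| \ge k+1 = \lceil (n+1)/2 \rceil$.

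The main obstacle is the even case $n = 2k$, where the elementary counting gives only $|D| \ge k$. To rule out equality I would argue by contradiction: if $|D| = k$, then $\sum_{v \in D} |N[v]| \ge 4k$ must be tight, which forces $|N[v]| = 4$ for every $v \in D$ (so $D$ avoids columns $1$ and $n$) and forces the closed neighborhoods $\{N[v] : v \in D\}$ to be pairwise disjoint. Since $y_1 \notin D$ and the only vertex outside column $1$ adjacent to $x_1$ is $x_2$, domination of $x_1$ forces $x_2 \in D$; symmetrically, $y_2 \in D$. But then $y_2 \in N[x_2] \cap N[y_2]$, violating disjointness. Hence $|D| \ge k+1$, which together with the construction above gives the claimed equality.
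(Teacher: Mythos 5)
Your proof is correct, but note that the paper itself offers no proof to compare against: Theorem \ref{domination 2n} is quoted from Jacobson and Kinch \cite{Kinch} and used as a black box. Your two halves both check out. The staircase $\{y_1, x_3, y_5, \dots\}$ does dominate column by column (each odd column is covered by its own staircase vertex, and the two vertices of an intervening even column are covered by the two flanking staircase vertices, which lie in opposite rows), giving $\lceil (n+1)/2 \rceil$ after the one-vertex patch in column $n$ when $n$ is even. The lower bound via $\sum_{v\in D}|N[v]| \ge 2n$ with $|N[v]|\le 4$ settles odd $n$ outright, and your tightness analysis for $n=2k$ is sound: equality forces $D$ to avoid the end columns and forces the closed neighborhoods to be pairwise disjoint, whereupon domination of $x_1$ and $y_1$ forces $x_2, y_2 \in D$ and $y_2 \in N[x_2]\cap N[y_2]$ gives the contradiction (for $n=2$ the forced avoidance of columns $1$ and $n$ already empties $D$). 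This is a clean, self-contained argument that would make the paper less dependent on the external reference; it is also consistent in spirit with the paper's own counting arguments elsewhere (e.g., the ``each vertex dominates four vertices'' bookkeeping in the proof of Theorem \ref{2Cn}).
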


\begin{Lem}\label{corner}
Let $G=P_2 \square P_n$, where $n \ge 2$. If neither $x_1$ nor $y_1$ belongs to a $\gamma(G)$-set $D$, 
then $\{x_2, y_2\} \subseteq D$. (Likewise, if neither $x_n$ nor $y_n$ belongs to $D$, then $\{x_{n-1}, y_{n-1}\} \subseteq D$.)
\end{Lem}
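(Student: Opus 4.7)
The plan is to prove both inclusions directly from the definition of a dominating set, exploiting the fact that the corner vertex $x_1$ has only two neighbors in $P_2 \square P_n$.

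First, I would identify the neighborhoods of the two corner vertices: in $P_2 \square P_n$, we have $N(x_1) = \{x_2, y_1\}$ and $N(y_1) = \{y_2, x_1\}$, so $N[x_1] = \{x_1, x_2, y_1\}$ and $N[y_1] = \{y_1, y_2, x_1\}$. Since $D$ is a dominating set, we need $D \cap N[x_1] \neq \emptyset$ and $D \cap N[y_1] \neq \emptyset$.

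Next, I would apply the hypothesis $x_1, y_1 \notin D$. From $D \cap N[x_1] \neq \emptyset$ together with $x_1, y_1 \notin D$, the only remaining candidate is $x_2$, so $x_2 \in D$. Similarly, from $D \cap N[y_1] \neq \emptyset$ together with $x_1, y_1 \notin D$, we must have $y_2 \in D$. This gives $\{x_2, y_2\} \subseteq D$. The parenthetical statement about the right-hand corner follows immediately by the graph automorphism of $P_2 \square P_n$ sending $x_i \mapsto x_{n+1-i}$ and $y_i \mapsto y_{n+1-i}$, together with Observation \ref{Obs2}.

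There is really no obstacle here; the lemma is a one-line consequence of the domination condition applied to the two degree-$2$ corner vertices. The only thing to be careful about is making sure one does not implicitly assume $n \geq 3$ when invoking $x_2, y_2$ — but since the hypothesis requires the existence of a $\gamma(G)$-set avoiding both $x_1$ and $y_1$, and $\gamma(P_2 \square P_2) = 2$ is attained by $\{x_1, y_2\}$ and $\{y_1, x_2\}$ (both of which contain $x_1$ or $y_1$), the only relevant cases indeed have $n \geq 3$, where $x_2$ and $y_2$ are well-defined vertices distinct from the corners.
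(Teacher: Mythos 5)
Your proof is correct and follows essentially the same route as the paper's: both arguments simply apply the domination condition to the closed neighborhoods $N[x_1]=\{x_1,x_2,y_1\}$ and $N[y_1]=\{x_1,y_1,y_2\}$ and eliminate the excluded vertices, with the symmetric claim following from the obvious automorphism. Your extra remark handling the $n=2$ degenerate case is a harmless refinement the paper omits.
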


\begin{proof}
By definition of a dominating set, either $x_1$ or a vertex in $N(x_1)=\{x_2, y_1\}$ belongs to $D$. 
If $x_1 \not\in D$ and $y_1 \not\in D$, then $x_2 \in D$. Similarly, either $y_1 \in D$ 
or a vertex in $N(y_1)=\{x_1, y_2\}$ belongs to $D$. If $x_1 \not\in D$ and $y_1 \not\in D$, 
then $y_2 \in D$ as well. Thus $x_1 \not\in D$ and $y_1 \not\in D$ implies $\{x_2, y_2\} \subseteq D$. \hfill
\end{proof}

\begin{Lem}\label{3,6}
Let $G=P_2 \square P_n$, where $n \ge 3$. If there exists a $\gamma(G)$-set containing no vertex of degree two, then $n=3$ or $n=6$.
\end{Lem}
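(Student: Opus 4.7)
The plan is to apply Lemma \ref{corner} at both ends of the ladder and then split into a handful of small cases together with a general counting argument for $n \ge 7$. The vertices of degree $2$ in $G$ are exactly the four corners $x_1, y_1, x_n, y_n$, so the hypothesis is that $D \cap \{x_1, y_1, x_n, y_n\} = \emptyset$, and Lemma \ref{corner} applied at both ends forces $\{x_2, y_2, x_{n-1}, y_{n-1}\} \subseteq D$.

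I would first dispose of $n \in \{3,4,5,6\}$ directly. For $n = 3$ the forced set collapses to $\{x_2, y_2\}$, which already has size $\gamma(P_2 \square P_3) = 2$ and dominates $G$, giving the $n=3$ example. For $n = 4$ or $5$ the forced vertices are four distinct elements, so $|D| \ge 4 > \gamma(G) = 3$, a contradiction. For $n = 6$ the forced vertices are again four distinct elements and $\gamma(G) = 4$, so $D = \{x_2, y_2, x_5, y_5\}$, and one checks directly that this dominates $G$; hence $n = 6$ occurs.

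The substantive case is $n \ge 7$. Set $R = \{x_i, y_i : 4 \le i \le n-3\}$; these $2(n-6)$ vertices are precisely those \emph{not} dominated by $\{x_2, y_2, x_{n-1}, y_{n-1}\}$, so the remainder $D_0 := D \setminus \{x_2, y_2, x_{n-1}, y_{n-1}\}$, of size $\gamma(G) - 4 = \lceil (n+1)/2 \rceil - 4$, must itself dominate $R$. Since every vertex of $G$ has degree at most $3$, we have $|N[v]| \le 4$, so $|D_0| \ge |R|/4 = (n-6)/2$. If $n$ is odd, the budget $(n-7)/2$ is strictly less than $(n-6)/2$ and we are done. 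If $n$ is even, the budget equals $(n-6)/2$ and equality must hold throughout the count: every $v \in D_0$ must satisfy $|N[v]| = 4$ and $N[v] \subseteq R$, and the sets $N[v]$ must partition $R$. The inclusion $N[v] \subseteq R$ forces $v$ into columns $5$ through $n-4$, while the partition condition makes $D_0$ a $2$-packing of $G$, so no two members of $D_0$ can sit in adjacent columns; this gives $|D_0| \le \lceil (n-8)/2 \rceil < (n-6)/2$, the desired contradiction.

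I expect the main obstacle to be the even-$n$ case, where the naive counting yields equality rather than a strict inequality. The crucial extra observation is that this equality forces both centrality ($N[v] \subseteq R$, pushing each $v \in D_0$ into columns $5,\dots,n-4$) and pairwise disjointness of the $N[v]$'s (the $2$-packing property); only combining these two restrictions with the narrow column range gives the upper bound that finishes the argument. Together with the small-case inspection, this yields $n \in \{3,6\}$.
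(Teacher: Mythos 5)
Your proof is correct, and while it opens exactly as the paper does (Lemma \ref{corner} at both ends forces $S_0=\{x_2,y_2,x_{n-1},y_{n-1}\}\subseteq D$, and the cases $n\in\{3,4,5,6\}$ are settled by comparing $|S_0|$ with $\gamma(G)$), your treatment of $n\ge 7$ takes a genuinely different route. The paper observes that the part of $G$ not dominated by $S_0$ is a copy of $P_2\square P_{n-6}$ and invokes Theorem \ref{domination 2n} to see that the remaining budget $\gamma(G)-4$ falls one short of $\gamma(P_2\square P_{n-6})$, splitting into the even and odd cases only to evaluate that formula. You instead run a first-principles counting argument: the $2(n-6)$ undominated vertices $R$ each cost at most $|N[v]|\le 4$ per remaining dominator, which kills odd $n$ outright, and for even $n$ the resulting equality forces $N[v]\subseteq R$ with the $N[v]$ partitioning $R$, whence $D_0$ is a $2$-packing confined to columns $5,\dots,n-4$ and $|D_0|\le(n-8)/2<(n-6)/2$. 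Your version is more self-contained (it needs only the trivial bound ``each vertex dominates at most four vertices'' rather than the exact value of $\gamma(P_2\square P_{n-6})$), and it sidesteps a small imprecision in the paper's reduction, namely that vertices of $D$ in columns $3$ or $n-2$ can help dominate $R$ without lying in the subgraph $P_2\square P_{n-6}$; the price is the extra packing analysis in the tight even case, which you carry out correctly. The paper's version is shorter given that Theorem \ref{domination 2n} is already on hand.
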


\begin{proof}
Suppose that $D$ is a $\gamma(G)$-set such that $\{x_1, y_1, x_n, y_n\} \cap D=\emptyset$. Let $S_0=\{x_2, y_2, x_{n-1}, y_{n-1}\}$. Then, by Lemma \ref{corner}, 
$S_0 \subseteq D$. Note that $|S_0|=2$ if and only if $n=3$: in this case, $\gamma(P_2 \square P_3)=2$ and $S_0=\{x_2, y_2\}$ is a $\gamma(P_2 \square P_3)$-set.
If $4 \le n \le 5$, then $|S_0|=4$ and $\gamma(P_2 \square P_n)=3$, and thus $S_0 \not\subseteq D$. If $n=6$, then $|S_0|=4$ and $\gamma(P_2 \square P_6)=4$: in fact, $S_0=\{x_2, y_2, x_5, y_5\}$ is a $\gamma(P_2 \square P_6)$-set. Now, we need to consider $n \ge 7$. Suppose that $S_0 \subseteq D$; we consider two cases.

\textit{Case 1. $n=2k$, where $k \ge 4$:} Here, $\gamma(P_2 \square P_{2k})=k+1$. Since $N[S_0]=\{x_i, y_i \mid 1 \le i \le 3\} \cup \{x_j, y_j \mid 2k-2 \le j \le 2k\}$, the part of $P_2 \square P_{2k}$ not dominated by $S_0$ is a $P_2 \square P_{2k-6}$. So, $k-3$ vertices of $D - S_0$ must dominate $P_2 \square P_{2k-6}$. But $\gamma(P_2 \square P_{2k-6})=k-2$ by Theorem \ref{domination 2n}, and we reach a contradiction.

\textit{Case 2. $n=2k+1$, where $k \ge 3$:} Here, $\gamma(P_2 \square P_{2k+1})=k+1$. Since $N[S_0]=\{x_i, y_i \mid 1 \le i \le 3\} \cup \{x_j, y_j \mid 2k-1 \le j \le 2k+1\}$, the part of $P_2 \square P_{2k+1}$ not dominated by $S_0$ is a $P_2 \square P_{2k-5}$. So, $k-3$ vertices of $D - S_0$ must dominate $P_2 \square P_{2k-5}$. But $\gamma(P_2 \square P_{2k-5})=k-2$ by Theorem \ref{domination 2n}, and we reach a contradiction.

Thus, we have shown that if $S_0 \subseteq D$, then $n=3$ or $n=6$. \hfill
\end{proof}

Next we compute the total number of $\gamma(P_2 \square P_n)$-sets for $n \ge 2$.

\begin{Thm}\label{2n, odd}
For $n \ge 2$,
\begin{equation*} \tau(P_2 \square P_n)= \left\{
\begin{array}{ll}
6 & \mbox{ if } n=2\\
3 & \mbox{ if } n=3\\
17 & \mbox{ if } n=6\\
2 & \mbox{ if $n$ is odd and } n \neq 3\\
2n+4 & \mbox{ if $n$ is even and } n \neq 2,6 \ .
\end{array} \right.
\end{equation*}
\end{Thm}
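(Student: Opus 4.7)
The proof splits into the small cases $n\in\{2,3,6\}$, the odd case $n\geq 5$, and the even case $n\geq 4$ with $n\neq 6$. For $n=2,3$, direct inspection of all subsets of size $\gamma$ suffices. For $n=6$, I would enumerate by corner structure: using Lemma~\ref{corner}, every $\gamma$-set either meets a corner at each end (contributing $12$ sets via the per-pair count in the generic even analysis below), or meets a corner at exactly one end together with $\{x_2,y_2\}\subseteq D$ or $\{x_5,y_5\}\subseteq D$ at the other (contributing $4$), or meets no corner at all, in which case $D=\{x_2,y_2,x_5,y_5\}$ is the single exceptional set permitted by Lemma~\ref{3,6} (contributing $1$), totaling $17$.

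For $n=2k+1$ odd with $n\geq 5$, so $|D|=k+1$: first, $|D\cap\{x_1,y_1\}|\leq 1$, since having both corners on the left would leave $k-1$ vertices to dominate $P_2\square P_{n-2}$, contradicting $\gamma(P_2\square P_{n-2})=k$ from Theorem~\ref{domination 2n}. Combined with the argument of Lemma~\ref{3,6}, exactly one of $\{x_1,y_1\}$ lies in $D$; by symmetry assume $x_1\in D$. The capacity inequality $\sum_{v\in D}|N[v]|\leq 4|D|=2n+2$ leaves slack $2$ over $|V|=2n$, which forces $x_2,y_2\notin D$: either $\{x_1,x_2\}$ or $\{x_1,y_2\}$ dominates only $5$ vertices, leaving $2n-5$ to be dominated by $k-1$ remaining vertices of combined capacity only $4(k-1)=2n-6$. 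Hence $y_2$ must be dominated via $y_3\in D$, and iterating the same capacity-forcing argument column by column produces the unique alternating completion $\{x_1,y_3,x_5,\ldots\}$; the symmetric choice $y_1\in D$ gives the second $\gamma$-set, for a total of $\tau=2$.

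For $n=2k$ even with $n\geq 4$ and $n\neq 6$, so $|D|=k+1$: the same capacity argument gives $|D\cap\{x_1,y_1\}|\leq 1$ and $|D\cap\{x_n,y_n\}|\leq 1$, and Lemma~\ref{3,6} with $n\neq 6$ forces at least one corner in $D$. \emph{Case (i):} $D$ contains one corner at each end. By the horizontal and vertical symmetries of the grid, it suffices to analyze the sub-case $(x_1\in D,\,x_n\in D)$ and multiply by $4$; the $k-1$ middle vertices of $D$ must cover the target $V\setminus(N[x_1]\cup N[x_n])$, and a structural analysis (driven by the now slack-$4$ capacity bound, together with forced placements at the two ends to dominate $y_2$ and $y_{n-1}$) shows these middle vertices form one of two pure alternating patterns (the one starting in column~$2$ and the one starting in column~$3$, with row parities dictated by the right-end constraint), or else one of $n/2-2$ single-defect variants in which a consecutive-column pair replaces one of the length-$2$ column gaps. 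Counting gives $2+(n/2-2)=n/2$ completions per corner-pair, hence $4\cdot(n/2)=2n$ sets in total. \emph{Case (ii):} $D$ misses both corners at one end; by Lemma~\ref{corner}, $\{x_2,y_2\}\subseteq D$ or $\{x_{n-1},y_{n-1}\}\subseteq D$. A tight capacity argument in each of the four sub-cases (two for each end, indexed by which corner occurs at the other) shows the remaining vertices form the unique alternating completion, giving $4$ sets. Summing, $\tau(P_2\square P_n)=2n+4$.

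The main obstacle is the classification in Case~(i) of the even case: proving that each corner-pair admits exactly $n/2$ completions, parameterized by two pure alternating patterns plus $n/2-2$ single-defect positions. I expect to establish this either by induction on $n$ (adding two columns at a time and showing that each completion of $P_2\square P_n$ corresponds uniquely to a completion of $P_2\square P_{n-2}$ plus one new pattern created by the additional gap), or by directly pinning down where the (at most two) doubly-dominated vertices allowed by the capacity slack can sit in the grid.
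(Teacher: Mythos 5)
Your overall architecture coincides with the paper's: it also isolates $n\in\{2,3,6\}$, shows the two alternating patterns are the only $\gamma$-sets for odd $n\neq 3$, and for even $n$ separates the boundary configurations (an end missing both corners, whence $\{x_2,y_2\}\subseteq D$ or $\{x_{n-1},y_{n-1}\}\subseteq D$ by Lemma~\ref{corner}, each contributing $2$ via $\tau(P_2\square P_{n-3})=2$) from the main configuration with one corner at each end, counted as $2n$. The genuine gap is that this last count --- the heart of the theorem --- is left as a plan: you write that you ``expect to establish'' the classification into $n/2$ completions per corner-pair by induction or by tracking the doubly-dominated vertices, but you do not do it. This is exactly where the paper invests its effort (Subcase~2.3 of its proof): it verifies the base case $\tau_3=8$ for $n=4$ and shows that in passing from $P_2\square P_n$ to $P_2\square P_{n+2}$ each such $\gamma$-set extends uniquely while exactly four new ones appear, so $\tau_3'=\tau_3+4=2(n+2)$. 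Until one of your two proposed arguments is carried out, the proof is incomplete at its decisive step.

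A second, more localized error: the symmetry reduction in your Case~(i) is invalid as stated. The automorphism group of $P_2\square P_n$ has order four, and the four corner-pairs fall into \emph{two} orbits, $\{(x_1,x_n),(y_1,y_n)\}$ and $\{(x_1,y_n),(y_1,x_n)\}$; no automorphism sends $(x_1,x_n)$ to $(x_1,y_n)$. So analyzing $(x_1,x_n)$ alone and multiplying by $4$ is not justified --- you must treat a representative of each orbit (or exhibit a bijection between their completions). The two counts do both equal $n/2$ (for $n=8$, the pair $(x_1,x_8)$ admits $\{x_1,y_2,x_4,y_6,x_8\}$, $\{x_1,y_3,x_4,y_6,x_8\}$, $\{x_1,y_3,x_5,y_6,x_8\}$, $\{x_1,y_3,x_5,y_7,x_8\}$, and $(x_1,y_8)$ admits four different sets), but the equality is a fact to be proved, not assumed; note, e.g., that $\{x_1,y_2,\dots\}$ completes for the pair $(x_1,x_8)$ while $\{x_1,x_2,\dots\}$ does not, and the situation reverses for $(x_1,y_8)$. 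Finally, a small point in the odd case: to rule out $D\cap\{x_1,y_1\}=\emptyset$ you need the separate fact (proved explicitly in the paper) that $\{x_2,y_2\}\subseteq D$ forces $n=3$; Lemma~\ref{3,6} by itself only excludes $\gamma$-sets avoiding all four corners, not those avoiding the two corners at one end.
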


\begin{proof}
Let $D$ be a $\gamma(P_2 \square P_n)$-set for $n \ge 2$. Notice that no $D$ contains both $x_1$ and $y_1$, or both $x_n$ and $y_n$, unless $n=2$. We consider two cases.

\textit{Case 1. $n \ge 3$ is odd:} Here, $\gamma(P_2 \square P_n)=\frac{n+1}{2}$. By Lemma \ref{3,6}, if there is a $D$ containing no vertex of degree two then $n=3$. Moreover, we note that $\{x_2, y_2\} \subseteq D$ if and only if $n=3$: If $\{x_2, y_2\} \subseteq  D$ and $n >3$, then the part of $P_2 \square P_{n}$ not dominated by $\{x_2, y_2\}$ is a $P_2 \square P_{n-3}$, and $\frac{n-3}{2}$ vertices of $D-\{x_2, y_2\}$ must dominate $P_2 \square P_{n-3}$. But $\gamma(P_2 \square P_{n-3})=\frac{n-1}{2}$ by Theorem \ref{domination 2n}, and we reach a contradiction. So, if $n>3$, by Lemma \ref{corner}, either $x_1 \in D$ or $y_1 \in D$. One can easily check that $x_1 \in D$ uniquely determines a $\gamma$-set $D=\{x_i, y_j \mid i \equiv 1, j \equiv 3 \pmod 4\}$. Similarly, $y_1 \in D$ uniquely determines a $\gamma$-set $D=\{x_i, y_j \mid i \equiv 3, j \equiv 1 \pmod 4\}$. Thus, $\tau(P_2 \square P_{n})=2$ for $n \neq 3$, and $\tau(P_2 \square P_3)=3$ by Lemma \ref{3,6}. (See Figure \ref{2x3} for the three $\gamma(P_2 \square P_3)$-sets, where the solid black vertices in each $P_2 \square P_3$ form a $\gamma(P_2 \square P_3)$-set.)
\begin{figure}[htpb]
\begin{center}
\scalebox{0.35}{\input{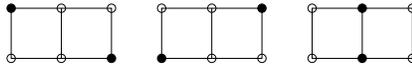}} \caption{$\gamma$-sets for $P_2 \square P_3$}\label{2x3}
\end{center}
\end{figure}

\textit{Case 2. $n \ge 2$ is even:} Here, $\gamma(P_2 \square P_n)=\frac{n}{2}+1$. If $n=2$, then $\gamma(P_2 \square P_2)=2$ and $\tau(P_2 \square P_2)=\tau(C_4)={4 \choose 2}=6$. We consider $n \ge 4$. 
By Lemma \ref{3,6}, if there is a $D$ containing no vertex of degree two (i.e., $\{x_2, y_2, x_{n-1}, y_{n-1}\} \subseteq D$), then $n=6$. We consider three subcases. 

\textit{Subcase 2.1. $\{x_2, y_2\} \subseteq D$ and $\{x_{n-1}, y_{n-1}\} \cap D = \emptyset$:} Let $\tau_1$ be the number of such $\gamma(P_2 \square P_{n})$-sets for $n \ge 4$. 
Note that the part of $P_2 \square P_{n}$ not dominated by $\{x_2, y_2\}$ is a $P_2 \square P_{n-3}$. So, $\tau_1$ equals the number of $\gamma(P_2 \square P_{n-3})$-sets with $\gamma(P_2 \square P_{n-3})=\frac{n}{2}-1$. One can easily see that $\tau_1=2$ when $n=4,6$. Since $\tau_1 (P_2 \square P_{n-3})=2$ for $n \ge 8$ by Case 1, we have $\tau_1=2$ for $n \ge 4$. 

\textit{Subcase 2.2. $\{x_2, y_2\} \cap D = \emptyset$ and $\{x_{n-1}, y_{n-1}\} \subseteq D$:} Let $\tau_2$ be the number of such $\gamma(P_2 \square P_{n})$-sets for $n \ge 4$. By Observation \ref{Obs2} and Subcase 2.1, we have $\tau_2=2$ for $n \ge 4$. 

\textit{Subcase 2.3. $\{x_2, y_2\} \not\subseteq D$ and $\{x_{n-1}, y_{n-1}\} \not\subseteq D$:} By Lemma \ref{corner}, $|\{x_1, y_1\} \cap D|=1$ and $|\{x_n, y_n\} \cap D|=1$. Let $D$ (resp. $D'$) be such a $\gamma$-set of $G=P_2 \square P_{n}$ (resp. $G'=P_2 \square P_{n+2}$), where $n \ge 4$. And let $\tau_3$ (resp. $\tau_3'$) be the number of such $\gamma$-sets of $G$ (resp. $G'$). We will show that $\tau_3=2n$, for $n \ge 4$, using induction. The base case, $n=4$, is easily verified (see Figure \ref{2x4}). Assume that $\tau_3=2n$ for $n \ge 4$.  
If $x_1 \in D$, then each $D$ extends to $D'$ such that $D'=D \cup \{x_{n+2}\}$ if $y_{n} \in D$ and $D'=D \cup \{y_{n+2}\}$ if $x_{n} \in D$; in addition, there are two additional $\gamma(G')$-sets which do not come from any $\gamma(G)$-sets, i.e., $\{x_i, y_j \mid i \equiv 1, j \equiv 3 \pmod 4 \mbox{ and } 1 \le i,j \le n+1\} \cup \{x_{n+2}\}$ and $\{x_i, y_j \mid i \equiv 1, j \equiv 3 \pmod 4 \mbox{ and } 1 \le i,j \le n+1\} \cup \{y_{n+2}\}$. Similarly, if $y_1 \in D$, then each $D$ extends to $D'$ and there are two additional $\gamma(G')$-sets which do not come from $\gamma(G)$-sets. So, $\tau_3'=\tau_3+4=2n+4=2(n+2)$.

Now, noting that $\{x_2, y_2, x_{n-1}, y_{n-1}\} \subseteq D$ implies $n=6$, combine the three disjoint cases to get $\tau=\tau_1+\tau_2+\tau_3=2+2+2n=2n+4$ if $n \neq 2, 6$ and $\tau(P_2 \square P_6)=(2 \cdot 6+4)+1=17$. \hfill
\end{proof}

See Figure \ref{2x4} for the collection of $\gamma(P_2 \square P_4)$-sets, where the solid black vertices in each $P_2 \square P_4$ form a $\gamma(P_2 \square P_4)$-set.

\begin{figure}[htpb]
\begin{center}
\scalebox{0.43}{\input{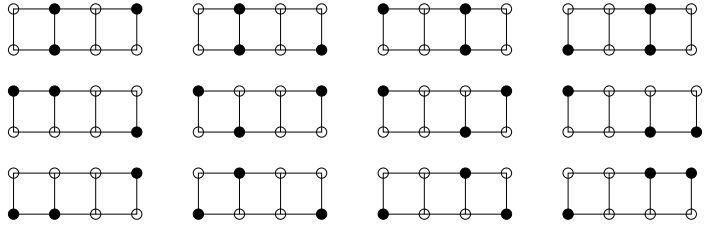}} \caption{$\gamma$-sets for $P_2 \square P_4$}\label{2x4}
\end{center}
\end{figure}

As an immediate consequence of Theorem \ref{2n, odd} for an odd $n
\ge 3$, we have the following

\begin{Crlr}\label{2nodd}
Let $n \ge 3$ be an odd number.
\begin{itemize}
\item[(i)] For each $v \in V(P_2 \square P_3)$, $DV(v)=1$.
\item[(ii)] For $x_i, y_i \in V(P_2 \square P_n)$, where $n \ge
5$,
\begin{equation*} DV(x_i)=DV(y_i)= \left\{
\begin{array}{ll}
1 & \mbox{ if $i$ is odd} \\
0 & \mbox{ if $i$ is even} \ .
\end{array} \right.
\end{equation*}
\end{itemize}
\end{Crlr}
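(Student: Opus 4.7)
The plan is to read off the corollary directly from Theorem \ref{2n, odd} together with the explicit $\gamma$-sets constructed in its proof. Recall that for odd $n$, the proof of Theorem \ref{2n, odd} (Case 1) exhibits every $\gamma(P_2 \square P_n)$-set concretely: when $n=3$ there are three sets, namely $\{x_2, y_2\}$, $\{x_1, y_3\}$, and $\{x_3, y_1\}$; when $n \ge 5$ there are exactly two sets,
\[ D_1 = \{x_i : i \equiv 1 \pmod 4\} \cup \{y_j : j \equiv 3 \pmod 4\} \]
and
\[ D_2 = \{x_i : i \equiv 3 \pmod 4\} \cup \{y_j : j \equiv 1 \pmod 4\}, \]
where $i,j$ range over $\{1,2,\dots,n\}$.

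For part (i), I would simply enumerate: each of the six vertices of $P_2 \square P_3$ lies in exactly one of the three $\gamma$-sets listed above, so $DV(v)=1$ for every $v$. As a sanity check, $\sum_v DV(v) = 6 = \tau(P_2 \square P_3) \cdot \gamma(P_2 \square P_3)$, consistent with Observation \ref{Obs1}.

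For part (ii), fix odd $n \ge 5$ and an index $i \in \{1,\dots,n\}$. If $i$ is even, then $i \not\equiv 1,3 \pmod 4$, so $x_i$ and $y_i$ lie in neither $D_1$ nor $D_2$; hence $DV(x_i)=DV(y_i)=0$. If $i$ is odd, then $i$ is congruent to exactly one of $1$ or $3$ mod $4$: in the first case $x_i \in D_1$ and $x_i \notin D_2$ while $y_i \notin D_1$ and $y_i \in D_2$; in the second case the roles of $D_1$ and $D_2$ swap. Either way, each of $x_i, y_i$ lies in exactly one $\gamma$-set, so $DV(x_i)=DV(y_i)=1$.

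There is no real obstacle: the substantive work was done in Theorem \ref{2n, odd}, and the corollary is just a bookkeeping exercise in residues modulo $4$. The only thing to be careful about is verifying that the described sets $D_1, D_2$ really do exhaust all $\gamma(P_2 \square P_n)$-sets for odd $n \ge 5$, but this is exactly what Case 1 of the proof of Theorem \ref{2n, odd} establishes via Lemmas \ref{corner} and \ref{3,6}.
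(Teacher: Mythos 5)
Your proof is correct and takes essentially the same route as the paper, which offers no separate argument but presents the corollary as an immediate consequence of Theorem \ref{2n, odd}: Case~1 of that theorem's proof exhibits exactly the $\gamma$-sets you list (the three sets for $n=3$ and the two sets $D_1, D_2$ for odd $n \ge 5$), and your residue-mod-$4$ bookkeeping, together with the check against Observation \ref{Obs1}, is precisely the intended verification.
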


\begin{Prop}
Let $n \ge 2$ be an even number.
\begin{itemize}
\item[(i)] For each $v \in V(P_2 \square P_2)$, $DV(v)=3$.
\item[(ii)] For $x_i, y_i \in V(P_2 \square P_n)$, where $n \ge
4$ and $n \neq 6$,
\begin{equation} \label{2xnevenDV}
\hspace{-0.18in}DV(x_i)=DV(y_i)=\left\{
\begin{array}{ll}
n+2-i& \mbox{ if $i$ is odd and }1 \le i \le n-3\\
4& \mbox{ if $i=2$ or }i=n-1\\
i+1& \mbox{ if $i$ is even and }4 \le i \le n \ .
\end{array} \right.
\end{equation}
\item[(iii)] For $x_i, y_i \in V(P_2 \square
P_6)$,
\begin{equation}\label{DV2x6}
DV(x_i)=DV(y_i)= \left\{
\begin{array}{ll}
7 & \mbox{ if $i=1$ or $i=6$} \\
5 & \mbox{ if $2 \le i \le 5$} \ .
\end{array} \right.
\end{equation}
\end{itemize}
\end{Prop}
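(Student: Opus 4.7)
Part (i) is immediate: $P_2 \square P_2 \cong C_4$ is vertex-transitive, with $\tau = 6$ by Theorem~\ref{2n, odd} and $\gamma = 2$ by Theorem~\ref{domination 2n}. Observation~\ref{Obs2} forces $DV$ to be constant across the four vertices, and Observation~\ref{Obs1} evaluates that constant as $\tau \cdot \gamma / 4 = 3$.

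For parts (ii) and (iii), my plan is to exploit the explicit classification of $\gamma(P_2 \square P_n)$-sets developed in the proof of Theorem~\ref{2n, odd} and, for each vertex $x_i$, tally its membership in each class. For $n \ge 4$ even, partition the $\gamma$-sets into three families: Family~A, the two Subcase~2.1 sets of the form $\{x_2, y_2\} \cup D'$ with $D'$ a $\gamma$-set of the copy of $P_2 \square P_{n-3}$ on columns $4, \dots, n$; Family~B, the mirror image; and Family~C, the $2n$ sets of Subcase~2.3. For $n = 6$ the extra singleton family $\{\{x_2, y_2, x_5, y_5\}\}$ must be added.

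The crucial structural step is a parameterization of Family~C. I will show that each $D$ in Family~C is uniquely determined by a triple $(v_1, k, \epsilon)$ with $v_1 \in \{x_1, y_1\}$, $k \in \{1, \dots, n/2\}$, and $\epsilon \in \{0, 1\}$: here $k$ is the unique index such that columns $2k-1$ and $2k$ both meet $D$; the choice of $v_1$ forces an alternating row pattern on the pre-shift odd columns $1, 3, \dots, 2k-1$ (each skipped even column $2j$ with $j < k$ needs both $x_{2j}$ and $y_{2j}$ dominated from the two adjacent in-$D$ odd columns, so those columns must use opposite rows); and $\epsilon$ similarly forces the row pattern on the post-shift even columns $2k, 2k+2, \dots, n$. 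This yields exactly $2 \cdot (n/2) \cdot 2 = 2n$ sets, matching Subcase~2.3.

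With the parameterization in hand, the tallies become bookkeeping. Families~A and B, whose members are written down explicitly via Corollary~\ref{2nodd}, contribute $2$ to $DV(x_i)$ at $i = 2$ and $i = n-1$ respectively, $1$ when $i$ is even with $i \ge 4$ (from A) or odd with $1 \le i \le n-3$ (from B), and $0$ otherwise. Family~C contributes $n - i + 1$ when $i$ is odd, since each admissible shift $k \ge (i+1)/2$ places a fixed row vertex in column $i$ independent of $\epsilon$ (so $n/2 - (i-1)/2$ shifts per $v_1$, doubled over the two choices of $v_1$ and halved by the $x \leftrightarrow y$ symmetry $DV(x_i) = DV(y_i)$); and contributes $i$ when $i$ is even, since each of the $i/2$ admissible shifts $k \le i/2$ places one $x_i$-set and one $y_i$-set among its two polarities. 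Combining yields (\ref{2xnevenDV}); for $n = 6$, adding $1$ at $i \in \{2, 5\}$ from the extra set yields (\ref{DV2x6}). The main obstacle is the Family~C structural claim; once it is in hand, the remainder is arithmetic, and the identity $\sum_v DV(v) = \tau \cdot \gamma$ from Observation~\ref{Obs1} provides a convenient sanity check.
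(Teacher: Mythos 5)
Your conclusions are correct and your tallies check out (I verified the Family A/B/C contributions against the formula and against the identity $\sum_v DV(v)=\tau\cdot\gamma=(2n+4)(\frac{n}{2}+1)=(n+2)^2$), but your route is genuinely different from the paper's. The paper proves (ii) by induction in steps of two, passing from $P_2\square P_n$ to $P_2\square P_{n+2}$: it splits the $\gamma$-sets into those with $\{x_2,y_2\}\subseteq D$ (giving $DV^1$) and those containing $x_1$ or $y_1$ (giving $DV^2$), and for each class tracks exactly which sets extend, which die, and which new sets appear, updating the domination values column by column. You instead write down a closed-form classification of all $2n+4$ minimum dominating sets --- the two ``$\{x_2,y_2\}$-type'' sets, their mirrors, and the $2n$ sets of Subcase 2.3 parameterized by $(v_1,k,\epsilon)$ --- and then count membership directly. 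Your approach buys a transparent global description of the $\gamma$-sets and reduces (ii) and (iii) to finite arithmetic, whereas the paper's induction avoids ever exhibiting the full list at the cost of heavy bookkeeping. The one load-bearing step you have only sketched is the Family C structural lemma: your alternation argument explains why the rows are forced once the occupied columns are known, but you still owe the claim that the occupied columns are exactly $1,3,\dots,2k-1,2k,2k+2,\dots,n$ with one vertex in each and no doubly-occupied interior column. This does close: for a Family C set, $\sum_{v\in D}|N[v]|=4(\frac{n}{2}+1)-2=2n+2$, so at most two units of excess domination are available; a doubly-occupied interior column $j$ consumes both and then forces $\gamma(P_2\square P_{j-2})+\gamma(P_2\square P_{n-j-1})\le \frac{n}{2}-1$, which fails, and any empty column must be dominated horizontally from both neighbors, which rules out every column pattern other than the $n/2$ shifted ones. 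With that lemma written out, your proof is complete.
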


\begin{proof}
Let $n \ge 2$ be an even number.

(i) Note that $P_2 \square P_2 \cong C_4$, $\gamma(C_4)=2$, and $\tau(C_4)=6$. By Observation
\ref{Obs1}, Observation \ref{Obs2}, and the vertex-transitivity, $DV(v)=3$ for each $v
\in V(P_2 \square P_2)$.

\vspace{.05in}

(ii) For an even $n \ge 4$, let $D$ (resp. $D'$) be a $\gamma$-set of $G=P_2 \square P_n$ (resp. $G'=P_2 \square P_{n+2}$). Since $DV_G(x_i)=DV_G(y_i)$ for each $i$ ($1 \le i \le n$), it suffices to compute $DV_G(x_i)$ for $1 \le i \le n$. We consider two cases.

\textit{Case 1. $\{x_1, y_1\} \cap D= \emptyset$:} By Lemma \ref{corner}, $\{x_2, y_2\}  \subseteq D$. Denote by $DV^1(v)$ the number of such $D$'s containing $v$. Notice that there are two such $\gamma(G)$-sets. We will show, by induction, that 
\begin{equation}\label{dv1}
DV_{G}^1(x_i)= \left\{
\begin{array}{ll}
2 & \mbox{ if $i=2$} \\
1 & \mbox{ if $i \ge 4$ and $i$ is even} \\
0 & \mbox{ if $i$ is odd} \ .
\end{array} \right.
\end{equation}
For $n=4$ (the base case), the two $\gamma$-sets are 
$\{x_2, y_2, x_4\}$ and $\{x_2, y_2, y_4\}$, thus satisfying (\ref{dv1}).  Assume that (\ref{dv1}) holds for $G$. Let $D_1$ and $D_2$ be $\gamma(G)$-sets, containing both $x_2$ and $y_2$, such that $x_n \in D_1$ and $y_n \in D_2$. Then $D_1$ extends to $D_1'=D_1 \cup \{y_{n+2}\}$ and $D_2$ extends to $D_2'=D_2 \cup \{x_{n+2}\}$, where $D_1'$ and $D_2'$ are $\gamma(G')$-sets. So, $DV_{G'}^1(x_i)=DV_G^1(x_i)$ for $1 \le i \le n$, $DV_{G'}^1(x_{n+1})=0$, and $DV_{G'}^1(x_{n+2})=1$. Thus
\begin{equation*}
DV_{G'}^1(x_i)=\left\{
\begin{array}{ll}
2 & \mbox{ if $i=2$} \\
1 & \mbox{ if $i \ge 4$ and $i$ is even} \\
0 & \mbox{ if $i$ is odd} \ ,
\end{array} \right.
\end{equation*}
proving (\ref{dv1}).

\textit{Case 2. $x_1 \in D$ or $y_1 \in D$:} Denote by $DV^2(v)$ the number of such $D$'s containing $v$. By Subcase 2.2 and Subcase 2.3 in the proof of Theorem \ref{2n, odd}, there are $2n+2$ such $\gamma(G)$-sets; $n+1$ such $D$'s containing $x_1$, and $n+1$ such $D$'s containing $y_1$. We will show, by induction, that
\begin{equation}\label{dv0}
DV_{G}^2(x_{i})=\left\{
\begin{array}{ll}
i & \mbox{ if } i \equiv 0,2 \pmod 4 \mbox{ and } 2 \le i \le n\\
n+2-i & \mbox{ if } i \equiv 1,3 \pmod 4 \mbox{ and } 1 \le i \le n-3\\
4 & \mbox{ if } i=n-1 \ .
\end{array} \right.
\end{equation} 

Noting that no $\gamma(G)$-set contains both $x_1$ and $y_1$, we consider two subcases.

\textit{Subcase 2.1. $x_1 \in D$:} Denote by $DV^{2,1}(v)$ the number of such $D$'s containing $v$. For $n=4$ (the base case), one can check that there are five such $\gamma$-sets: $\{x_1, x_2, y_4\}$, $\{x_1, y_2, x_4\}$, $\{x_1, y_3, x_4\}$, $\{x_1, y_3, y_4\}$,  and $\{x_1, x_3, y_3\}$. Let $D_1, D_2, \cdots, D_{n+1}$ be $\gamma(G)$-sets containing $x_1$, where $\{x_{n-1}, y_{n-1}\} \subseteq D_{n+1}$. Then, for $1 \le i \le n$, each $D_i$ extends to $D_i'=D_i \cup \{x_{n+2}\}$ if $y_n \in D_i$ and  $D_i'=D_i \cup \{y_{n+2}\}$ if $x_n \in D_i$, where each $D_i'$ ($1 \le i \le n$) is a $\gamma(G')$-set; $D_{n+1}=\{x_i, y_j \mid i \equiv 1, j \equiv 3 \pmod 4 \mbox{ and } 1 \le i,j \le n-2\} \cup \{x_{n-1}, y_{n-1}\}$ does not extend to a $\gamma(G')$-set, but there exists a $\gamma(G')$-set $D_{n+1}'=\{x_i, y_j \mid i \equiv 1, j \equiv 3 \pmod 4 \mbox{ and } 1 \le i, j \le n\} \cup \{x_{n+1}, y_{n+1}\}$ which does not come from any $\gamma(G)$-set. Further, there exist two additional $\gamma(G')$-sets which do not come from any $\gamma(G)$-sets such as $D_{n+2}'=\{x_i, y_j \mid i \equiv 1, j \equiv 3 \pmod 4 \mbox{ and } 1 \le i, j \le n+1\} \cup \{x_{n+2}\}$ and $D_{n+3}'=\{x_i, y_j \mid i \equiv 1, j \equiv 3 \pmod 4 \mbox{ and } 1 \le i, j \le n+1\} \cup \{y_{n+2}\}$. So, noting that $n$ is even, we have the following:
\begin{equation*}
DV_{G'}^{2,1}(x_i)= \left\{
\begin{array}{ll}
\vspace{.05in}
DV_G^{2,1}(x_i) & \mbox{ if } i \equiv 0,2,3 \pmod 4 \mbox{ and } 1 \le i \le n-2 \\
DV_G^{2,1}(x_i)+2 & \mbox{ if } i \equiv 1 \pmod 4 \mbox{ and } 1 \le i \le n-2 \ ,
\end{array} \right.
\end{equation*}
\begin{equation*}
DV_{G'}^{2,1}(x_{n-1})= \left\{
\begin{array}{ll}
\vspace{.05in}
DV_G^{2,1}(x_{n-1})-1 & \mbox{ if } n \equiv 0 \pmod 4 \\
DV_G^{2,1}(x_{n-1})+2 & \mbox{ if } n \equiv 2 \pmod 4 \ ,
\end{array} \right.
\end{equation*} 
\begin{equation*}
DV_{G'}^{2,1}(x_{n+1})=\left\{
\begin{array}{ll}
3 & \mbox{ if } n \equiv 0 \pmod 4 \\
1 & \mbox{ if } n \equiv 2 \pmod 4 \ ,
\end{array} \right.
\end{equation*}
$DV_{G'}^{2,1}(x_n)=DV_{G}^{2,1}(x_n)$, and $DV_{G'}^{2,1}(x_{n+2})=\frac{n}{2}+1$.

\textit{Subcase 2.2. $y_1 \in D$:} Denote by $DV^{2,2}(v)$ the number of such $D$'s containing $v$. For $n=4$ (the base case), one can check that there are five such $\gamma$-sets: $\{y_1, y_2, x_4\}$, $\{y_1, x_2, y_4\}$, $\{y_1, x_3, x_4\}$, $\{y_1, x_3, y_4\}$,  and $\{y_1, x_3, y_3\}$. Let $\Gamma_1, \Gamma_2, \cdots, \Gamma_{n+1}$ be $\gamma(G)$-sets containing $y_1$, where $\{x_{n-1}, y_{n-1}\} \subseteq \Gamma_{n+1}$. Then, for $1 \le i \le n$, each $\Gamma_i$ extends to $\Gamma_i'=\Gamma_i \cup \{x_{n+2}\}$ if $y_n \in \Gamma_i$ and $\Gamma_i'=\Gamma_i \cup \{y_{n+2}\}$ if $x_n \in \Gamma_i$, where each $\Gamma_i'$ ($1 \le i \le n$) is a $\gamma(G')$-set; $\Gamma_{n+1}=\{x_i, y_j \mid i \equiv 3, j \equiv 1 \pmod 4 \mbox{ and } 1 \le i, j \le n-2\} \cup \{x_{n-1}, y_{n-1}\}$ does not extend to a $\gamma(G')$-set, but there exists a $\gamma(G')$-set $\Gamma_{n+1}'=\{x_i, y_j \mid i \equiv 3, j \equiv 1 \pmod 4 \mbox{ and } 1 \le i, j \le n\} \cup \{x_{n+1}, y_{n+1}\}$ which does not come from any $\gamma(G)$-set. Further, there exist two additional $\gamma(G')$-sets which do not come from any $\gamma(G)$-sets such as $\Gamma_{n+2}'=\{x_i, y_j \mid i \equiv 3, j \equiv 1 \pmod 4 \mbox{ and } 1 \le i, j \le n+1\} \cup \{x_{n+2}\}$ and $\Gamma_{n+3}'=\{x_i, y_j \mid i \equiv 3, j \equiv 1 \pmod 4 \mbox{ and } 1 \le i, j \le n+1\} \cup \{y_{n+2}\}$. So, noting that $n$ is even, we have the following:
\begin{equation*}
DV_{G'}^{2,2}(x_i)= \left\{
\begin{array}{ll}
\vspace{.05in}
DV_G^{2,2}(x_i) & \mbox{ if } i \equiv 0,1,2 \pmod 4 \mbox{ and } 1 \le i \le n-2 \\
DV_G^{2,2}(x_i)+2 & \mbox{ if } i \equiv 3 \pmod 4 \mbox{ and } 1 \le i \le n-2 \ ,
\end{array} \right.
\end{equation*}
\begin{equation*}
DV_{G'}^{2,2}(x_{n-1})= \left\{
\begin{array}{ll}
\vspace{.05in}
DV_G^{2,2}(x_{n-1})+2 & \mbox{ if } n \equiv 0 \pmod 4 \\
DV_G^{2,2}(x_{n-1})-1 & \mbox{ if } n \equiv 2 \pmod 4 \ ,
\end{array} \right.
\end{equation*}  
\begin{equation*}
DV_{G'}^{2,2}(x_{n+1})=\left\{
\begin{array}{ll}
1 & \mbox{ if } n \equiv 0 \pmod 4 \\
3 & \mbox{ if } n \equiv 2 \pmod 4 \ ,
\end{array} \right.
\end{equation*}
$DV_{G'}^{2,2}(x_n)=DV_{G}^{2,2}(x_n)$, and $DV_{G'}^{2,2}(x_{n+2})=\frac{n}{2}+1$.

Next, assume that (\ref{dv0}) holds for $G$. Noting that $DV^2(v)=DV^{2,1}(v)+DV^{2,2}(v)$ and that $n$ is even, by Subcase 2.1 and Subcase 2.2, we have
\begin{equation*}
DV_{G'}^2(x_i)= \left\{
\begin{array}{ll}
\vspace{.05in}
DV_G^2(x_i) & \mbox{ if } i \equiv 0,2 \pmod 4 \mbox{ and } 1 \le i \le n-2 \\
DV_G^2(x_i)+2 & \mbox{ if } i \equiv 1,3 \pmod 4 \mbox{ and } 1 \le i \le n-2 \ ,
\end{array} \right.
\end{equation*}
$DV_{G'}^2(x_{n-1})=DV_G^2(x_{n-1})+1$, $DV_{G'}^2(x_n)=DV_{G}^2(x_n)$, $DV_{G'}^2(x_{n+1})=4$, and $DV_{G'}^2(x_{n+2})=n+2$, proving (\ref{dv0}).

Now, noting that $DV(v)=DV^1(v)+DV^2(v)$ for $v \in V(P_2 \square P_n)$, where $n \ge 4$ is even and $n \neq 6$, combine (\ref{dv1}) and (\ref{dv0}) to obtain (\ref{2xnevenDV}), proving $(ii)$.

\vspace{.05in}

(iii) By Theorem \ref{2n, odd}, $P_2 \square P_6$ has an additional $\gamma$-set $\{x_2, y_2, x_5, y_5\}$. This,
together with (\ref{2xnevenDV}), for $x_i, y_i \in V(P_2 \square P_6)$, we
obtain
\begin{equation*}DV(x_i)=DV(y_i)=
\left\{
\begin{array}{ll}
8-i & \mbox{ if $i$ is odd and $1 \le i \le 3$}\\
5 & \mbox{ if $i=2$ or $i=5$}\\
i+1 & \mbox{ if $i$ is even and $4 \le i \le 6$ },
\end{array} \right.
\end{equation*}
which equals the domination value in (\ref{DV2x6}). \hfill
\end{proof}


\section{Total number of minimum dominating sets and domination value in $P_2 \square C_n$}

For $n \ge 3$, consider $P_2 \square C_n$ as two copies
of $C_n$ with vertices labeled $x_1, x_2, \ldots, x_n$ and $y_1,
y_2, \ldots, y_n$ with only the edges $x_iy_i$, for each $i$ ($1
\le i \le n$), between two cycles (see Figure \ref{2cn}).
\begin{figure}[htpb]
\begin{center}
\hspace*{1.2in}\scalebox{0.45}{\input{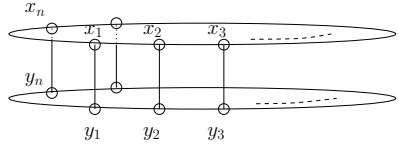}}
\vspace*{-2.24in}
\caption{Labeling of vertices of $P_2 \square C_n$}\label{2cn}
\end{center}
\end{figure}

\vspace{-.2in}
We recall the following result.

\begin{Thm} \cite{FD}
For $n \ge 3$,
\begin{equation*}
\gamma(P_2 \square C_n)=
\left\{
\begin{array}{ll}
\vspace{.05in}
\frac{n}{2} & \mbox{ if } n \equiv 0 \pmod 4\\
\lceil \frac{n+1}{2} \rceil & \mbox{ if } n \not\equiv 0 \pmod 4 .
\end{array} \right.
\end{equation*}
\end{Thm}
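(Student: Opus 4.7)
The plan is to establish matching lower and upper bounds. For the lower bound, note that every vertex of $P_2 \square C_n$ has degree~$3$, so $|N[v]|=4$; hence, for any dominating set $D$, $2n = |V(G)| \le \sum_{v\in D}|N[v]| = 4|D|$, giving $\gamma(P_2 \square C_n) \ge \lceil n/2 \rceil$. For $n$ odd this already equals $\lceil (n+1)/2 \rceil$, and for $n \equiv 0 \pmod 4$ it equals $n/2$, so only the case $n \equiv 2 \pmod 4$ will require strengthening.

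For the upper bound I would exhibit explicit dominating sets. When $n=4k$, the set $D_{4k} = \{x_i : i \equiv 1 \pmod 4\} \cup \{y_j : j \equiv 3 \pmod 4\}$ has cardinality $n/2$, and the closed neighborhoods $\{N[v] : v \in D_{4k}\}$ partition $V(G)$, as a direct check shows. For $n \not\equiv 0 \pmod 4$, I would modify this pattern at a ``seam'' by adding one or two vertices: take $D_{4k+1} = \{x_{4j+1}: 0\le j\le k\} \cup \{y_{4j+3}: 0\le j\le k-1\}$ of size $2k+1$; take $D_{4k+2} = \{x_{4j+1}: 0\le j\le k-1\} \cup \{y_{4j+3}: 0\le j\le k-1\} \cup \{x_{4k+1}, y_{4k+2}\}$ of size $2k+2$; and take $D_{4k+3} = \{x_{4j+1}: 0\le j\le k\} \cup \{y_{4j+3}: 0\le j\le k\}$ of size $2k+2$. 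Each of these has the claimed cardinality $\lceil (n+1)/2 \rceil$, and verifying domination reduces to checking $\{x_{i-1},x_i,x_{i+1},y_i\} \subseteq N[x_i]$ and its analogue for $y_i$ across indices modulo $n$.

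The key step is ruling out $|D|=n/2$ when $n \equiv 2 \pmod 4$. Suppose for contradiction such a $D$ exists. Then equality holds throughout $2n \le 4|D|$, forcing $D$ to be \emph{efficient}: the closed neighborhoods $\{N[v] : v \in D\}$ partition $V(G)$. By the $x \leftrightarrow y$ symmetry, some $x_{i_0}\in D$; then $N[x_{i_0}]=\{x_{i_0-1},x_{i_0},x_{i_0+1},y_{i_0}\}$ must be disjoint from every other $N[v]$ with $v\in D$, which excludes $x_{i_0\pm 1}, x_{i_0\pm 2}, y_{i_0}, y_{i_0\pm 1}$ from $D$. The vertex $y_{i_0+1}$ must still be dominated, and the only member of $N[y_{i_0+1}]$ not already excluded is $y_{i_0+2}$, so $y_{i_0+2}\in D$. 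Iterating, $D$ is forced into the period-$4$ pattern $x_{i_0},\,y_{i_0+2},\,x_{i_0+4},\,y_{i_0+6},\ldots$ on the cycle, which closes up on $C_n$ only when $n \equiv 0 \pmod 4$. This contradicts $n \equiv 2 \pmod 4$, yielding $\gamma(P_2 \square C_n) \ge n/2+1 = \lceil(n+1)/2\rceil$.

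The main obstacle is this rigidity argument; every other step is an index-chase. The delicate point is verifying that the forcing chain has no branches -- that is, that efficiency alone pins down $y_{i_0+2}$ as the unique dominator of $y_{i_0+1}$ -- and then confirming that the resulting period-$4$ orbit really does fail to close unless $4 \mid n$, rather than merely requiring $n$ even.
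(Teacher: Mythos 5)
Your argument is correct and complete. Note, however, that the paper does not prove this statement at all: it is quoted from the reference [FD] (Eroh, Gera, Kang, Larson, and Yi, \emph{Domination in Functigraphs}), so there is no internal proof to compare against. On its own merits, your proof is sound: the packing bound $2n\le 4|D|$ gives $\gamma\ge\lceil n/2\rceil$, which suffices except when $n\equiv 2\pmod 4$; your four explicit dominating sets all have the claimed sizes and do dominate (I checked the seam columns and the wrap-around in each residue class); and the rigidity argument is airtight, since equality in the packing bound forces $\{N[v]:v\in D\}$ to partition $V(G)$, the exclusions around $x_{i_0}$ leave $y_{i_0+2}$ as the unique dominator of $y_{i_0+1}$, and the resulting period-$4$ forcing chain returns to column $i_0$ after $n/2$ steps in the wrong row exactly when $n/2$ is odd. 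The only cosmetic point is the phrase ``by the $x\leftrightarrow y$ symmetry, some $x_{i_0}\in D$''; what you mean is that $D$ is nonempty and the row-swapping automorphism lets you assume without loss of generality that a chosen element lies in the $x$-row, which is fine but worth stating that way.
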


We introduce the following definition which will be used in the proof of Theorem \ref{2Cn}.

\begin{Def}
Let $G^1$ and $G^2$ be disjoint copies of a graph $G$, and let $D$ be a $\gamma(P_2 \square G)$-set. Let $\langle D \cap V(G^1) \rangle=\cup_{i=1}^{m_1} \mathcal{H}_i^1$, a disjoint union of connected components such that $|V(H_i^1)| \le |V(H_{i+1}^1)|$ for $1 \le i \le m_1-1$; similarly, we write $\langle D \cap V(G^2) \rangle=\cup_{i=1}^{m_2} \mathcal{H}_i^2$. Let $\alpha=\max(|V(\mathcal{H}_{m_1}^1)|, |V(\mathcal{H}_{m_2}^2)|)$; we will denote by $\mathcal{H}_{\alpha}$ any $\mathcal{H}_i^j$ with $|V(\mathcal{H}_i^j)|=\alpha$, for $j=1,2$ ($1 \le i \le m_1$ or $1 \le i \le m_2$). 
\end{Def}

\textbf{Example.} The black vertices in Figure \ref{def} form a $\gamma(P_2 \square C_{10})$-set $D$, where $\langle D \rangle$ contains $2\mathcal{H}_2$. 
\begin{figure}[htpb]
\begin{center}
\hspace*{.5in}
\scalebox{0.4}{\input{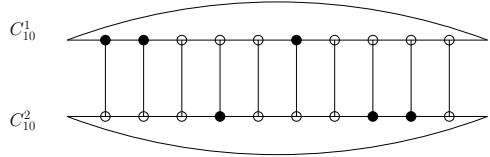}}
\vspace*{-.08in}
\caption{$2\mathcal{H}_2 \subseteq \langle D \rangle$, where $D$ is a $\gamma(P_2 \square C_{10})$-set}\label{def}
\end{center}
\end{figure}

\begin{Thm}\label{2Cn}
Let $n \ge 3$. For each $v \in  V(P_2 \square C_n)$, 
\begin{equation*}
DV(v)=
\left\{
\begin{array}{ll}
\vspace{.05in}
1 & \mbox{ if } n \equiv 0 \pmod 4\\
\vspace{.05in}
\frac{n+1}{2}  & \mbox{ if } n \equiv 1, 3 \pmod 4 \mbox{ and } n \neq 3\\
\vspace{.05in}
(\lceil \frac{n+1}{2} \rceil)^2 & \mbox{ if } n \equiv 2 \pmod 4 \mbox{ and } n \neq 6\\
\vspace{.05in}
3 & \mbox{ if } n=3\\
\vspace{.05in}
17 & \mbox{ if } n=6 \ .
\end{array} \right.
\end{equation*}
\end{Thm}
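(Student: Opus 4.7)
The graph $G = P_2 \square C_n$ is vertex-transitive: the cyclic rotation $(x_i, y_i) \mapsto (x_{i+1}, y_{i+1})$ together with the row-swap $x_i \leftrightarrow y_i$ generate an automorphism group acting transitively on $V(G)$. By Observation~\ref{Obs2}, $DV_G(v)$ is therefore constant on $V(G)$, and Observation~\ref{Obs1} yields
\[
DV_G(v) \;=\; \frac{\tau(P_2 \square C_n)\cdot \gamma(P_2 \square C_n)}{2n}.
\]
Since $\gamma(P_2 \square C_n)$ has already been recorded, the whole theorem reduces to computing $\tau(P_2 \square C_n)$ in each of the five cases, after which the $DV$-values are read off by direct division.

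For $n \equiv 0 \pmod 4$, one has $\gamma = n/2$ and $|N[v]| = 4$ for every $v$, so $|D|\cdot 4 = 2n = |V(G)|$ forces every $\gamma$-set to be an efficient (perfect) dominating set. Pinning $x_1 \in D$ then propagates the unique pattern $\{x_1, y_3, x_5, y_7,\ldots\}$ around the cycle, which closes up precisely because $n \equiv 0 \pmod 4$; the four cyclic shifts exhaust everything, yielding $\tau = 4$ and hence $DV = 1$.

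For $n \not\equiv 0 \pmod 4$ one has $\gamma = \lceil(n+1)/2\rceil$, and the ``closed-neighborhood excess'' $4\gamma - 2n$ equals $2$ for odd $n$ and $4$ for $n \equiv 2 \pmod 4$; this quantity counts (with multiplicity) the over-dominated vertex slots. I would classify $\gamma$-sets using the $\mathcal{H}_\alpha$ data introduced just before the theorem, splitting the enumeration by $\alpha$ (maximum block size along a single $C_n$ copy). For odd $n \neq 3$, this forces a unique template --- either a same-row double such as $\{x_i, x_{i+1}\}$ (when $n \equiv 1 \pmod 4$) or an offset alternating pattern with wrap-around overlap (when $n \equiv 3 \pmod 4$) --- from which $\tau = 2n$ follows by counting the $n$ cyclic shifts and the $2$ row-swap images. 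For $n \equiv 2 \pmod 4$ with $n \neq 6$, an analogous but more elaborate case analysis yields $\tau = n(n+2)$, matching $DV = \lceil(n+1)/2\rceil^2$.

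The exceptional values $n = 3$ and $n = 6$ are completed by direct enumeration: in $P_2\square C_3$ the three ``column-pair'' sets $\{x_i, y_i\}$ supplement the generic count of $6$, giving $\tau = 9$; in $P_2\square C_6$ the three ``antipodal'' sets $\{x_i, y_i, x_{i+3}, y_{i+3}\}$ ($i = 1,2,3$) supplement the generic count of $48$, giving $\tau = 51$. In each instance $DV$ is read off from the displayed formula above. The main obstacle, as expected, is the structural analysis in the $n \equiv 2 \pmod 4$ case, where one must enumerate all admissible overlap configurations without double-counting; this is precisely where the $\mathcal{H}_\alpha$-classification does the heavy lifting.
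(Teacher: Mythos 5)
Your framework is sound: vertex-transitivity together with Observations \ref{Obs1} and \ref{Obs2} does reduce the theorem to computing $\tau(P_2\square C_n)$, every target value of $\tau$ you list is arithmetically consistent with the claimed $DV$-values, and the ``excess'' $4\gamma-2n$ is exactly the right organizing invariant (it is how the paper controls which vertices are doubly dominated). The $n\equiv 0\pmod 4$ case is essentially complete as written --- perfect domination forces the unique period-$4$ pattern and its four rotations --- and the bookkeeping for $n=3$ and $n=6$ (the three extra sets $\{x_i,y_i\}$, resp.\ $\{x_i,y_i,x_{i+3},y_{i+3}\}$) is correct.

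The gap is that for $n\not\equiv 0\pmod 4$ the enumeration, which is the entire content of the theorem, is asserted rather than carried out. For odd $n$ you claim the excess ``forces a unique template'' whose orbit under rotation and row-swap has size $2n$; but establishing this requires a chain of exclusion arguments (for $n\equiv 1\pmod 4$, that no $\gamma$-set contains both $x_1$ and $y_1$, both $x_1$ and $y_2$, or both $x_1$ and $x_3$; for $n\equiv 3\pmod 4$, the analogues with $x_2$ in place of $y_2$), each of which the paper proves by a counting contradiction against Theorem \ref{domination 2n} or against $\gamma(P_4)=2$; one must also check that the template's stabilizer is trivial and that no $\gamma$-set falls outside the orbit. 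More seriously, for $n\equiv 2\pmod 4$ --- which you yourself flag as ``the main obstacle'' --- you supply no argument at all, and this is the bulk of the paper's proof: one must show no component of $\langle D\rangle$ has four or more vertices, enumerate the $\gamma$-sets containing a fixed vertex according to whether $\langle D\rangle$ contains an $\mathcal{H}_3$ ($2k+2$ of them), two disjoint copies of $\mathcal{H}_2$ ($(2k-1)(k+1)$ of them), exactly one $\mathcal{H}_2$ (a genuinely delicate subcase shown to be impossible), or no $\mathcal{H}_2$ ($(k+1)(2k+3)$ of them, with the $n=6$ anomaly entering here via $\tau(P_2\square P_3)=3$). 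Until that case analysis is supplied, $\tau=n(n+2)$ is a conjecture consistent with the answer, not a proof.
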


\begin{proof}
By Observation \ref{Obs2} and the vertex-transitivity, $DV(v)=DV(x_1)$ for each $v \in V(P_2 \square C_n)$. Let $D$ be a $\gamma(P_2 \square C_n)$-set containing $x_1$, where $n \ge 3$; note that at least a vertex in $\{x_2, x_3, y_1, y_2, y_3\}$ belongs to $D$. Noting that each vertex dominates four vertices, we consider four cases.

\textit{Case 1. $n=4k$, where $k \ge 1$:} Since $\gamma(P_2 \square C_{4k})=2k$ and $|V(P_2 \square C_{4k})|=8k$, each vertex is dominated by exactly one vertex (i.e., no vertex is doubly dominated). Thus there is a unique $D$ containing $x_1$, i.e., $D=\{x_i, y_j \mid i \equiv 1, j \equiv 3 \pmod 4\}$, and hence $DV(x_1)=1$.

\vspace{.07in}

\textit{Case 2. $n=4k+1$, where $k \ge 1$:} Here $\gamma(P_2 \square C_{4k+1})=2k+1$. We will show that no $D $ contains both $x_1$ and a vertex in $\{y_1, y_2, x_3\}$. First, we note that no $D$ contains both $x_1$ and $y_1$: if $\{x_1, y_1\} \subseteq D$, then the part of $P_2 \square C_{4k+1}$ not dominated by $\{x_1, y_1\}$ is a $P_2 \square P_{4k-2}$, and $2k-1$ vertices of $D-\{x_1, y_1\}$ must dominate $P_2 \square P_{4k-2}$. But $\gamma(P_2 \square P_{4k-2})=2k$ by Theorem \ref{domination 2n}, and we reach a contradiction. Second, we note that no $D$ contains both $x_1$ and  $y_2$: if $\{x_1, y_2\} \subseteq D$, then the part of $P_2 \square C_{4k+1}$ not dominated by $\{x_1, y_2\}$ is the graph $H$ in Figure \ref{x1y2}, and $2k-1$ vertices of $D-\{x_1, y_2\}$ must dominate $H$. 
\begin{figure}[htpb]
\begin{center}
\hspace*{1in}
\scalebox{0.45}{\input{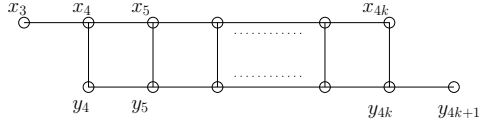}}
\vspace*{-2.35in}
\caption{$H \subset P_2 \square C_{4k+1}$}\label{x1y2}
\end{center}
\end{figure}
If we let $S_0=\{x_i, y_j \mid i \equiv 0, j \equiv 2 \pmod 4 \mbox{ and } 4 \le i, j \le 4k-2\}$, then $|S_0|=2(k-1)$, $S_0$ dominates $8(k-1)$ vertices, the part of $H$ not dominated by $S_0$ is a $P_4$, and one vertex of $D-(S_0 \cup \{x_1, y_2\})$ must dominate $P_4$. But $\gamma(P_4)=2$, and we reach a contradiction. (Similarly, no $D$ contains both $x_1$ and $y_{4k+1}$.) Third, no $D$ contains both $x_1$ and $x_3$: if $\{x_1, x_3\} \subseteq D$, then a vertex in $N[y_2]=\{x_2, y_1, y_2, y_3\}$ must belong to $D$. Since $\{x_1, y_1\} \not\subseteq D$ (and thus $\{x_3, y_3\} \not\subseteq D$ by the vertex-transitivity) and $\{x_1, y_2\} \not\subseteq D$, $x_2 \in D$. If $R_0:=\{x_1, x_2, x_3\} \subseteq D$, then the part of $P_2 \square C_{4k+1}$ not dominated by $R_0$, say $H_1$, must be dominated by $2k-2$ vertices in $D-R_0$. Since $|V(P_2 \square C_{4k+1})|=8k+2$ and $|N[R_0]|=8$, $2k-2$ vertices in $D-R_0$ must dominate $8k-6$ vertices. But each vertex in $P_2 \square C_{4k+1}$ dominates four vertices, and we reach a contradiction. (Similarly, no $D$ contains both $x_1$ and $x_{4k}$.)  So, we only need to consider $D$ such that (i) $\{x_1, x_2\} \subseteq D$ (resp. $\{x_1, x_{4k+1}\} \subseteq D$) or (ii) no vertex in $N[x_1]$ is doubly dominated (i.e., $\{x_1, y_3\} \subseteq D$ and $\{x_1, y_{4k}\}\subseteq D$). 

\textit{Subcase 2.1. $\{x_1, x_2\} \subseteq D$ (resp. $\{x_1, x_{4k+1}\} \subseteq D$):} The part of $P_2 \square C_{4k+1}$ not dominated by $\{x_1, x_2\}$, say $H_2$, must be dominated by $2k-1$ vertices in $D-\{x_1, x_2\}$. Since $|V(P_2 \square C_{4k+1})|=8k+2$ and $|N[\{x_1, x_2\}]|=6$, $2k-1$ vertices in $D-\{x_1,x_2\}$ must dominate $H_2$ with $|V(H_2)|=8k-4$, and thus there exists at most one $\gamma$-set containing both $x_1$ and $x_2$ (resp. $x_1$ and $x_{4k+1}$). Noting that $\{x_1\} \cup \{x_i, y_j \mid i \equiv 2, j \equiv 0 \pmod 4\}$ (resp. $\{x_i, y_j \mid i \equiv 1, j \equiv 3 \pmod 4\}$) is a $\gamma$-set, there is a unique $D$ containing both $x_1$ and $x_2$ (resp. $x_1$ and $x_{4k+1}$).

\textit{Subcase 2.2. No vertex in $N[x_1]$ is doubly dominated:} Since $x_1 \not\in V(\mathcal{H}_2)$, by Subcase 2.1, there are $2k-1$ slots in which $\mathcal{H}_2$ can be placed.

By Subcase 2.1 and Subcase 2.2, we have $DV(x_1)=2(1)+(2k-1)=2k+1$.

\vspace{.07in}

\textit{Case 3. $n=4k+2$, where $k \ge 1$:} Here $\gamma(P_2 \square C_{4k+2})=2k+2$. We will show that no $D$ contains a $\mathcal{H}_{\alpha}$ for $\alpha \ge 4$. If $R_1:=\{x_1, x_2, x_3, x_4\} \subseteq D$, then the part of $P_2 \square C_{4k+2}$ not dominated by $R_1$, say $F_1$, must be dominated by $2k-2$ vertices in $D-R_1$. Since $|V(P_2 \square C_{4k+2})|=8k+4$ and $|N[R_1]|=10$, $2k-2$ vertices in $D-R_1$ must dominate $F_1$ with $|V(F_1)|=8k-6$. But each vertex in $P_2 \square C_{4k+2}$ dominates four vertices, and we reach a contradiction. We consider four subcases. 

\textit{Subcase 3.1. $\mathcal{H}_3 \subseteq \langle D \rangle$:} We denote by $DV^1(x_1)$ the number of such $D$'s containing $x_1$. We note that the placement of $\mathcal{H}_3$ uniquely determines $D$: if $R_2:=\{x_1, x_2, x_3\} \subseteq D$, then the part of $P_2 \square C_{4k+2}$ not dominated by $R_2$, say $F_2$, must be dominated by $2k-1$ vertices in $D-R_2$. Since $|V(P_2 \square C_{4k+2})|=8k+4$ and $|N[R_2]|=8$, $2k-1$ vertices in $D-R_2$ must dominate $F_2$ with $|V(F_2)|=8k-4$, and thus there exists at most one $\gamma$-set containing $R_2$. Noting that $\{x_1, x_2\} \cup \{x_i, y_j \mid i \equiv 3, j\equiv 1 \pmod 4 \mbox{ and } 3 \le i,j \le 4k+2\}$ is a $\gamma$-set, there is a unique $D$ containing $R_2$. If $x_1 \in V(\mathcal{H}_3)$, there are three such $D$'s, i.e., $\{x_1, x_2, x_3\} \subseteq D$, $\{x_{4k+2}, x_1, x_2\} \subseteq D$, and $\{x_{4k+1}, x_{4k+2}, x_1\} \subseteq D$. If $x_1 \not\in V(\mathcal{H}_3)$, there are $2k-1$ slots in which $\mathcal{H}_3$ can be placed. So, $DV^1(x_1)=3+(2k-1)=2k+2$.

\textit{Subcase 3.2. $2\mathcal{H}_2 \subseteq \langle D \rangle$:} We denote by $DV^2(x_1)$ the number of such $D$'s containing $x_1$. Since each vertex in $\mathcal{H}_2$ is doubly dominated, four vertices in $2\mathcal{H}_2$ are doubly dominated, and hence the placement of $2 \mathcal{H}_2$ uniquely determines $D$. If $x_1 \in V(\mathcal{H}_2)$ (i.e., $\{x_1, x_2\} \subseteq D$ or $\{x_1, x_{4k+2}\} \subseteq D$), then there are $2k-1$ available slots to place the other $\mathcal{H}_2$. If $x_1 \not\in V(\mathcal{H}_2)$, then there are ${2k-1 \choose 2}$ available slots to place $2\mathcal{H}_2$'s. Thus, $DV^2(x_1)$ $=2(2k-1)+{2k-1 \choose 2}=(2k-1)(k+1)$. 

\textit{Subcase 3.3. $\mathcal{H}_2 \subseteq \langle D \rangle$ and $2\mathcal{H}_2 \not\subseteq \langle D \rangle$:} We will show that no such $D$ exists. Without loss of generality, suppose that $\{x_1, x_2\} \subseteq D$. In order for $y_3$ to be dominated, a vertex in $N[y_3]=\{x_3, y_2, y_3, y_4\}$ must be in $D$. By the hypothesis, $\{x_1, x_2, x_3\} \not\subseteq D$. First, suppose that $R_3:=\{x_1, x_2, y_2\} \subseteq D$. Then the part of $P_2 \square C_{4k+2}$ not dominated by $R_3$, say $F_3$, must be dominated by $2k-1$ vertices in $D-R_3$. Since $|V(P_2 \square C_{4k+2})|=8k+4$ and $|N[R_3]|=7$, $2k-1$ vertices in $D-R_3$ must dominate $F_3$ with $|V(F_3)|=8k-3$. But each vertex in $P_2 \square C_{4k+2}$ dominates four vertices, and we reach a contradiction. Second, suppose that $R_4:=\{x_1, x_2, y_3\} \subseteq D$. Then the part of $P_2 \square C_{4k+2}$ not dominated by $R_4$, say $F_4$, is a graph isomorphic to $H$ in Figure \ref{x1y2}, and $2k-1$ vertices of $D-R_4$ must dominate $F_4 \cong H$, which is a contradiction by Case 2. Third, suppose that $R_5:=\{x_1,x_2,y_4\} \subseteq D$. Then the part of $P_2 \square C_{4k+2}$ not dominated by $R_5$, say $F_5$, must be dominated by $2k-1$ vertices in $D-R_5$. Since $|V(P_2 \square C_{4k+2})|=8k+4$ and $|N[R_5]|=10$, $2k-1$ vertices in $D-R_5$ must dominate $F_5$ with $|V(F_5)|=8k-6$, and thus there exist two vertices in $N[F_5]$ that are doubly dominated. When $k=1$, one can easily see that $y_5 \in D$ (i.e., $2\mathcal{H}_2 \subseteq \langle D \rangle$) or $x_6 \in D$ (i.e., $\mathcal{H}_3 \subseteq \langle D \rangle$); both cases contradict to the assumption. So we consider for $k \ge 2$. Without loss of generality, we may assume that at least one vertex in $N[y_4] \cap N[F_5]=\{x_4, y_5\}$ is doubly dominated. In order for $x_4$ to be doubly dominated, $x_5 \in D$. If $\{x_1, x_2, y_4, x_5\} \subseteq D$, then the part of $P_2 \square C_{4k+2}$ not dominated by $\{x_1, x_2, y_4, x_5\}$ is the graph $H'$ in Figure \ref{C4k+2}, 
\begin{figure}[htpb]
\begin{center}
\hspace*{1in}\scalebox{0.45}{\input{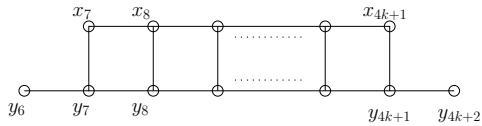}}
\vspace*{-2.3in}
\caption{$H' \subset P_2 \square C_{4k+2}$, where $k \ge 2$}\label{C4k+2}
\end{center}
\end{figure}and $2k-2$ vertices of $D-\{x_1, x_2, y_4, x_5\}$ must dominate $H'$. If we let $S'=\{x_i, y_j \mid i \equiv 1, j \equiv 3 \pmod 4 \mbox{ and } 6 \le i, j \le 4k\}$, then $|S'|=2k-3$, $S'$ dominates $8k-12$ vertices, the part of $H'$ not dominated by $S'$ is a $P_4$, and one vertex of $D-(S' \cup \{x_1, x_2, y_4, x_5\})$ must dominate $P_4$. But $\gamma(P_4)=2$ and we reach  a contradiction. In order for $y_5$ to be doubly dominated, a vertex in $\{x_5, y_5, y_6\}$ must belong to $D$. Since $\{x_1, x_2, y_4, x_5\} \not\subseteq D$ and $\{x_1, x_2, y_4, y_5\} \not\subseteq D$, $y_6 \in D$. In this case, i.e., $\{x_1, x_2, y_4, y_6\} \subseteq D$, note that $x_1$, $x_2$, and $y_5$ are doubly dominated. In order for $x_5$ to be dominated, a vertex in $N[x_5]=\{x_4, x_5, x_6, y_5\}$ must be in $D$ and each case results in at least two additional vertices to be doubly dominated, which is a contradiction. Thus, there is no $\gamma(P_2 \square C_{4k+2})$-set containing exactly one $\mathcal{H}_2$.

\textit{Subcase 3.4. $\mathcal{H}_2 \not\subseteq \langle D \rangle$:} We denote by $DV^3(x_1)$ the number of such $D$'s containing $x_1$. First, suppose that $\{x_s, y_s\} \subseteq D$ for some $s$ ($1 \le s \le 4k+2$). If $\{x_1, y_1\} \subseteq D$, then the part of $P_2 \square C_{4k+2}$ not dominated by $\{x_1, y_1\}$ is $P_2 \square P_{4k-1}$, and $2k$ vertices of $D-\{x_1, y_1\}$ must dominate $P_2 \square P_{4k-1}$. By Theorem \ref{2n, odd}, there exist two such $D$'s for $k \neq 1$ (i.e., $n \neq 6$) and there exist three such $D$'s for $k=1$ (i.e., $n=6$). If $x_1 \in D$ and $\{y_1, y_2, y_{4k+2} \} \cap D = \emptyset$, then there are $2k$ available slots in which $\{x_s, y_s\} \subseteq D$ can be placed for some $s \neq 1$. Second, suppose that no two adjacent vertices belong to $D$. If we let $S_1=\{x_i, y_j \mid i \equiv 1, j \equiv 3 \pmod 4 \mbox{ and } 1 \le i,j \le 4k\}$, then $|S_1|=2k$ and the part of $P_2 \square C_{4k+2}$ not dominated by $S_1$ is a $P_4$, so two vertices of  $D-S_1$ must dominate $P_4$. Since no two adjacent vertices belong to $D$, if $S_1 \subseteq D$, then $\{x_{4k}, y_{4k+1}\} \subseteq D$ or $\{x_{4k}, y_{4k+2}\} \subseteq D$ or $\{x_{4k+1}, y_{4k+2}\} \subseteq D$, thus there are two pairs of vertices (not necessarily disjoint) in $D$ that are at distance two apart. The number of ways of selecting 2 out of $2k+2$ available slots is ${2k+2 \choose 2}=(k+1)(2k+1)$. Thus, $DV^3(x_1)=2+2k+(k+1)(2k+1)=(k+1)(2k+3)$ if $k \neq 1$, and $DV^3(x_1)=11$ if $k=1$.

Now, noting that $DV(x_1)=DV^1(x_1)+DV^2(x_1)+DV^3(x_1)$, we have $DV(x_1)=(2k+2)^2$ if $k \neq 1$, and $DV(x_1)=17$ if $k=1$.

\vspace{.07in}

\textit{Case 4. $n=4k+3$, where $k \ge 0$:} Here $\gamma(P_2 \square C_{4k+3})=2k+2$. When $k=0$, one can easily check that there are three $\gamma$-sets containing $x_1$, i.e., $\{x_1, y_1\}$, $\{x_1, y_2\}$, and $\{x_1, y_3\}$. So $DV(x_1)=3$ for $x_1 \in V(P_2 \square C_3)$. Next, we consider for $k \ge 1$. We will show that no $D$ contains both $x_1$ and a vertex in $\{y_1, x_2, x_3\}$. First, note that no $D$ contains both $x_1$ and $y_1$: If $\{x_1, y_1\} \subseteq D$, then the part of $P_2 \square C_{4k+3}$ not dominated by $\{x_1, y_1\}$ is $P_2 \square P_{4k}$, and $2k$ vertices of $D-\{x_1, y_1\}$ must dominate $P_2 \square P_{4k}$. But $\gamma(P_2 \square P_{4k})=2k+1$ by Theorem \ref{domination 2n}, and we reach a contradiction. Second, note that no $D$ contains both $x_1$ and $x_2$: if $\{x_1, x_2\} \subseteq D$, then the part of $P_2\square C_{4k+3}$ not dominated by $\{x_1, x_2\}$, say $H^*$, must be dominated by $2k$ vertices. If we let $S^*=\{x_i, y_j \mid i \equiv 2, j \equiv 0 \pmod 4 \mbox{ and } 4 \le i, j \le 4k\}$, then $|S^*|=2k-1$ and the part of $P_2 \square C_{4k+3}$ not dominated by $S^* \cup \{x_1, x_2\}$ is a $P_4$, and one vertex of $D-(S^* \cup \{x_1, x_2\})$ must dominate $P_4$. But $\gamma(P_4)=2$, and we reach a contradiction. (Similarly, no $D$ contains both $x_1$ and $x_{4k+3}$.) Third, note that no $D$ contains both $x_1$ and $x_3$: if $\{x_1, x_3\} \subseteq D$, then a vertex in $N[y_2]=\{x_2, y_1, y_2, y_3\}$ must belong to $D$. Since $\{x_1, y_1\} \not\subseteq D$, $\{x_3, y_3\} \not\subseteq D$, and $\{x_1, x_2\} \not\subseteq D$, we need to consider $\{x_1, x_3, y_2\} \subseteq D$: since $|V(P_2 \square C_{4k+3})|=8k+6$ and $|N[\{x_1, y_2, x_3\}]|=8$, $2k-1$ vertices of $D-\{x_1, x_3, y_2\}$ must dominate $8k-2$ vertices, which is impossible since each vertex in $P_2 \square C_{4k+3}$ dominates four vertices. (Similarly, $\{x_1, x_{4k+2}\} \not\subseteq D$.) So, we only need to consider $D$ such that (i) $\{x_1, y_2\} \subseteq D$ (resp. $\{x_1, y_{4k+3}\} \subseteq D$) or (ii) no vertex in $N[x_1]$ is doubly dominated.  So suppose that $\{x_1, y_2\} \subseteq D$. Then the part of $P_2 \square C_{4k+3}$ that are not dominated by $\{x_1, y_2\}$, say $H''$, must be dominated by $2k$ vertices. Since $|V(P_2 \square C_{4k+3})|=8k+6$ and $|N[\{x_1, y_2\}]|=6$, $2k$ vertices of $D-\{x_1, y_2\}$ must dominate $H''$ with $|V(H'')|=8k$, and thus there exists at most one such $D$. Since $\{x_1, y_2\} \cup \{x_i, y_j \mid i \equiv 0, j \equiv 2 \pmod 4 \mbox{ and } 3 \le i, j \le 4k+3\}$ is a $\gamma$-set, if $\{x_1, y_2\} \subseteq D$, then there exists a unique such $D$. Similarly, there exists a unique $D$ containing both $x_1$ and $y_{4k+3}$. If no vertex in $N[x_1]$ is doubly dominated (i.e, $\{x_1, y_3, y_{4k+2}\} \subseteq D$), then there are $2k$ slots in which a pair of vertices of $D$ at distance two apart can be placed. Thus, $DV(x_1)=2+2k$ if $k \ge 1$, and $DV(x_1)=3$ if $k=0$.~\hfill
\end{proof}

As an immediate consequence of Theorem \ref{2Cn}, Observation \ref{Obs1}, Observation \ref{Obs2}, and the vertex-transitivity of $P_2 \square C_n$, we have the following. 

\begin{Crlr}
For $n \ge 3$, 
\begin{equation*}
\tau(P_2 \square C_n)
\left\{
\begin{array}{ll}
4 & \mbox{ if } n \equiv 0 \pmod 4\\
2n & \mbox{ if } n \equiv 1, 3 \pmod 4 \mbox{ and } n \neq 3\\
n(n+2) & \mbox{ if } n \equiv 2 \pmod 4 \mbox{ and } n \neq 6\\
9 & \mbox{ if } n=3\\
51 & \mbox{ if } n=6 \ .
\end{array} \right.
\end{equation*}
\end{Crlr}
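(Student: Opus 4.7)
The plan is to exploit the fact that $P_2 \square C_n$ is vertex-transitive, so that Observation \ref{Obs2} forces $DV_{P_2 \square C_n}(v)$ to be independent of $v$. Combined with Observation \ref{Obs1}, this yields
\[
\tau(P_2 \square C_n) \cdot \gamma(P_2 \square C_n) \;=\; \sum_{v \in V(P_2 \square C_n)} DV(v) \;=\; 2n \cdot DV(x_1),
\]
so that $\tau(P_2 \square C_n) = 2n \cdot DV(x_1) / \gamma(P_2 \square C_n)$. Thus the task reduces to plugging in the values of $DV(x_1)$ from Theorem \ref{2Cn} and of $\gamma(P_2 \square C_n)$ from Theorem \ref{FD}-reference (the formula recalled just before the definition), and simplifying case by case.

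I would handle the five cases in the same order as in Theorem \ref{2Cn}. For $n \equiv 0 \pmod 4$, $\gamma = n/2$ and $DV = 1$, giving $\tau = 2n/(n/2) = 4$. For $n \equiv 1,3 \pmod 4$ with $n \neq 3$, $\gamma = (n+1)/2$ and $DV = (n+1)/2$, giving $\tau = 2n$. For $n \equiv 2 \pmod 4$ with $n \neq 6$, $\gamma = (n+2)/2$ and $DV = ((n+2)/2)^2$, giving $\tau = n(n+2)$. The two small exceptional cases are computed directly: for $n=3$, $\gamma=2$ and $DV=3$ give $\tau = 6 \cdot 3/2 = 9$; for $n=6$, $\gamma=4$ and $DV=17$ give $\tau = 12 \cdot 17 / 4 = 51$.

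There is essentially no obstacle here: all the real work was done in proving Theorem \ref{2Cn}. The only thing worth checking carefully is that the divisibility in $2n \cdot DV(x_1)/\gamma$ comes out to an integer in every case, which it does because $\tau$ is a count. In particular, for the $n \equiv 2 \pmod 4$ case one should verify that $\lceil (n+1)/2 \rceil = (n+2)/2$ since $n$ is even, so that $(\lceil (n+1)/2 \rceil)^2 / \gamma = (n+2)/2$, confirming $\tau = n(n+2)$. The proof is therefore a one-line invocation of Observations \ref{Obs1} and \ref{Obs2} together with the vertex-transitivity of $P_2 \square C_n$, followed by the five arithmetic substitutions above.
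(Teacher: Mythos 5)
Your proposal is correct and is exactly the paper's argument: the corollary is stated there as an immediate consequence of Theorem \ref{2Cn}, Observations \ref{Obs1} and \ref{Obs2}, and the vertex-transitivity of $P_2 \square C_n$, i.e.\ $\tau = 2n\cdot DV(x_1)/\gamma$, with the same five case-by-case substitutions you carry out.
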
 


\section{Open Problems}

We end this paper with some open problems. One could ask the following questions.

\textbf{1.} In our terminology, Mynhardt \cite{Mynhardt} characterized vertices $v$ in a tree $T$ such that $DV(v)=\tau(T)$ or $DV(v)=0$. Can we describe vertices satisfying $DV(v)=k$ for $k \neq 0, \tau(T)$?

\textbf{2.} For $e \in E(G)$, can we find the bounds of $\tau(G-e)$ in terms of $\tau(G)$? And, for $v \in V(G-e)$, how does $DV_{G-e}(v)$ change in terms of $DV_G(v)$?
 
\textbf{3.} For $w\in V(G)$, can we find the bounds of $\tau(G-w)$ in terms of $\tau(G)$? And, for $v \in V(G-w)$, how does $DV_{G-w}(v)$ change in terms of $DV_{G}(v)$?

\textbf{4.} For a given graph $G$, can we characterize subgraphs $H \subseteq G$ satisfying $DV_H(v)=DV_G(v)$ for each vertex $v \in V(H)$?

In parallel with the idea of $\tau(G)$, the anonymous referee suggested the following questions.

\textbf{5.} Can we compute the \emph{number} of $ir$-sets (maximal irredundant sets of minimum cardinality), $\gamma$-sets (minimum dominating sets), $\gamma_t$-sets (minimum total dominating sets),  $i$-sets (minimum independent dominating sets), $\beta_0$-sets (maximum independent sets), $\Gamma$-sets (minimal dominating sets of maximum cardinality), $IR$-sets (maximum irredundant sets) in a graph $G$?\\

\textit{Acknowledgement.} The author thanks Cong X. Kang for suggesting the notion of domination value, as well as his helpful comments and suggestions. The author also thanks the referee for bringing to her attention the reference \cite{Slater}, and other helpful comments and suggestions.

\end{document}